\definecolor{bluecite}{HTML}{0875b7}
\numberwithin{equation}{section}
\numberwithin{figure}{section}
\newtheorem{proposition}{Proposition}[section]
\newtheorem{theorem}{Theorem}[section]
\newtheorem{lemma}{Lemma}[section]
\newtheorem{corollary}{Corollary}[section]
\newtheorem{definition}{Definition}[section]
\newtheorem{remark}{Remark}[section]
\newcommand{\R}{{\mathbb R}}
\newcommand{\ds}{\displaystyle}
\numberwithin{equation}{section}
\newcommand{\E}{\mathscr E}
\newcommand{\J}{\mathscr{J}}
\newcommand{\Pl}{(\mathscr{P})}
\definecolor{acsblue}{RGB}{17,76,139}
\newcommand{\W}{W_F^{1,p}(M)}
\title{Critical elliptic equations  on non-compact Finsler manifolds}
\author{Csaba Farkas}
\address{Sapientia Hungarian University of Transylvania, Department of Mathematics and Computer Science, Tg. Mure\c s, Romania.}
\email{farkascs@ms.sapientia.ro, farkas.csaba2008@gmail.com}
\providecommand{\customgenericname}{}
\newcommand{\newcustomtheorem}[2]{%
	\newenvironment{#1}[1]
	{%
		\renewcommand\customgenericname{#2}%
		\renewcommand\theinnercustomgeneric{##1}%
		\innercustomgeneric
	}
	{\endinnercustomgeneric}
}
\subjclass{58J05, 35J60}
\keywords{ Critical problems;  non-compact Finsler manifolds; Randers spaces; Compact embedding;  Hardy-type inequalities.}
\begin{document}
\maketitle

\begin{abstract} 
	In the present paper we deal with a quasilinear elliptic equation involving a critical Sobolev exponent on non-compact Randers spaces. Under very general assumptions on the perturbation, we prove the existence of a non-trivial solution. The approach is based on the direct methods of the calculus of variations. One of the key steps is to prove that the energy functional associated with the problem is weakly lower semicontinuous on small balls of the Sobolev space, which is provided by a general inequality. In the end, we prove Hardy-type inequalities on Finsler manifolds as an application of this inequality.	

\end{abstract}

	%\tableofcontents
		\section{Introduction and main result}
		Since the pioneering work of Brezis and Nirenberg (\cite{BN}) a lot of attention has been paid to  the following problem:
		$$
		\left\{
		\begin{array}{ll}
		-\Delta_{p} u=|u|^{p^*-2}u+\lambda |u|^{p-2}u, & \hbox{ in } \Omega \subset \mathbb{R}^d, \\
		u=0,&  \hbox{ on } \partial\Omega 
		\end{array}
		\right.\eqno{(\mathscr{P})}
		$$
		where $\Omega$ is a bounded domain of $\mathbb{R}^d$, and $\ds p^{*}=\frac{pd}{d-p}$ is the  critical Sobolev exponent, while $\Delta_{p}$ is the $p$-Laplace operator.
		 In \cite{BN} the authors studied the case $p=2$ and they proved that if $\lambda_1(\Omega)$ is the first eigenvalue for $-\Delta$ with Dirichlet boundary conditions, then, if $d\geq 4$ for every $\lambda \in (0,\lambda_1(\Omega))$ there exists a positive solution, if $d=3$ and $\Omega$ is a ball, then, a solution exists if and only if  $\lambda\in \left(\frac{\lambda_1(\Omega)}{4}, \lambda_1(\Omega)\right)$. The proof  is based on a { local Palais Smale} condition and, accordingly, on the construction of minimax levels for the energy functional associated with the problem $\Pl$ in suitable intervals. 
%		Azorero–Peral \cite{Garcia}, Egnell \cite{Egnell}, and Guedda–Véron \cite{GueddaVeron} studied as well problem $\Pl$ with the assumption $p = q$. 	They proved that for every $0<\lambda<\lambda_1(\Omega)$, this problem has a positive solution in high dimensional case, i.e. $d\geq p^2.$ 	
%		
		
	Zou in \cite{Zou} proved that when $d\geq d(p):=[p^2]-[[p^2]-p^2],$ then, problem $\Pl$ has a positive solution if and only if $\lambda\in (0,\lambda_1)$ ($\lambda_1$ being the first eigenvalue for $-\Delta_p$ with Dirichlet boundary conditions). When $d<d(p)$, and $\Omega$ is a ball, then $\Pl$  has no solution if $\lambda\geq \lambda_1$, has a solution for $\lambda\in (\lambda^*,\lambda_1)$, no solution for $\lambda\in(0,\lambda_*)$ (for convenient $0<\lambda_*\leq \lambda^*<\lambda_1)$.
		
		In the literature most of the papers dealing with critical elliptic equations  prove a {local Palais--Smale condition}, which  relies on the  well known  {concentration compactness principle}, see   P.L. Lions in \cite{Lions}, which is one of the most powerful tools in the case of lack of compactness. In this regard we mention the result of Chabrowski (\cite{Chabrowski}) where the author applies  concentration compactness principle to a non-homogeneous problem with non-constant coefficients. 
		
		Recently in \cite{FF_Collectanea} the authors proposed an alternative method to the problem $$
		\left\{
		\begin{array}{ll}
		-\Delta_p u=
		|u|^{p^*-2}u+ g(u), & \hbox{ in } \Omega \\
		u=0, & \hbox{on } \partial \Omega
		\end{array}
		\right.\eqno{ \Pl}
		$$
		where $\Omega$ is a bounded domain,  by employing the  direct methods of calculus of variations. Indeed,  they proved that the energy functional $\E$ associated with the problem  is locally  sequentially weakly lower semicontinuous (LSC), and with direct, simple arguments they prove that $\E$ has a local minimum, which is a weak solution of the problem $\Pl$. Later the idea of this paper was successfully applied in different context as well, see \cite{Bisci_London,Bisci_Jde}.  Here we would like to highlight the paper by M. Squassina  \cite{Squassina}, where the lower semicontinuity argument together with a Mountain Pass geometry has been exploited  for a multiplicity result when $g(u)=g(x,u)=\lambda|u|^{q-2}u+\varepsilon h(x)$, $p<q<p^*$,  $\lambda$ is large and $\varepsilon$ small. Note that the proof of LSC in our case is based on a general inequality (see Lemma \ref{fontosLemma}) that differs from the argument of Squassina. 
		
				In the case of non-compact Riemannian manifolds the study of critical elliptic equations changes dramatically, see for instance Yau \cite{Yau}. Thus as one expects, the curvature plays a crucial role in the study of critical elliptic problem, in this connection we mention the work of Aviles and McOwen \cite{Aviles}. 
		
		In the light of the above, the aim of the present manuscript is to answer the question of whether an existence result for a critical elliptic equation can be given on non-compact Finsler manifolds. Therefore we consider the following problem:
		$$
		\left\{
		\begin{array}{ll}
		-\Delta_{p,F} u+|u|^{p-2}u=
		\mu |u|^{p^*-2}u+\lambda \alpha(x)h(u), & \hbox{ in } M, \\
		u\in W^{1,p}_F(M)
		\end{array}
		\right.\eqno{(\mathscr{P}_{\lambda, \mu})}
		$$
		where $M$ is a $d$-dimensional non-compact Randers space endowed with the Finsler metric defined as 
		\begin{equation} \label{Randers_metric}
		F(x,y) = \sqrt{g_x(y,y)} + \beta_x(y), \quad (x,y) \in TM,
		\end{equation} 
		where $g$ is a Riemannian metric and $\beta_x$ is a $1$-form on $M$.  $\Delta_{p,F}$ is the \textit{Finsler $p$-Laplace operator},  $h:\R\to\R$ is a continuous function, and $\ds p^{*}=\frac{pd}{d-p}$ is the  critical Sobolev exponent.

		In the case of non-compact Finsler manifolds the study of critical elliptic equation is quite delicate. First,  it turns out that if $M$ is a non-compact Finsler-Hadamard manifold with infinite reversibility constant, then the Sobolev space $W_F^{1,2}(M)$ is not necessary a vector space, see Farkas, Krist\'aly, Varga \cite{FarkasKristalyVarga_Calc}. On the other hand by Farkas, Krist\'aly, Mester \cite{FKM_Morrey} the continuous Sobolev-type embedding do not necessarily hold. 
		
		Beside the pathological phenomena appearing in the  theory of Sobolev spaces on Finsler manifolds, one of the main obstacles in dealing with existence and multiplicity results for quasilinear problems with critical nonlinearity is represented by the weakly lower semicontinuity of the energy functional associated with the problem $(\mathscr{P}_{\lambda,\mu})$. Thus 
		$${E}_{\lambda,\mu}(u)=\frac{1}{p}\int\limits_{M} \left({F^*}^p(x,Du)+|u|^p\right)\,\mathrm{d}V_F(x)-\frac{\mu}{p^{*}}\int\limits_{M} |u|^{p^{*}}\,\mathrm{d}V_F(x)-\lambda \int\limits_{M}\alpha(x)H(u)\,\mathrm{d}V_F(x),$$
		is not sequentially weakly lower semicontinuous in $\W$ and does not satisfy the well known  Palais Smale condition. However, $E_{\lambda,\mu}$ is of class $C^1$ in $\W$ and its critical points turn out to be the weak solutions of problem $(\mathscr{P}_{\lambda,\mu})$.
		
		In the sequel we state our main result, in order to do that, we need some notations and assumptions.  Let $(M,F)$ be a $d$-dimensional Randers space endowed with the Finsler metric \eqref{Randers_metric}, such that $(M,g)$ is a complete non-compact Riemannian manifold  of bounded geometry, i.e. with Ricci curvature bounded from below, i.e.  ${\rm 
			Ric}_{(M,g)}\geq k (d-1),$ for some $k\in \mathbb{R}$, having positive injectivity radius.   Denote by  $\label{betanorma}\|\beta\|_g(x):=\sqrt{g_x^*(\beta_x,\beta_x)}$  for every $x\in M,$ where $g^*$ is the co-metric of $g$, and $\displaystyle a=\sup_{x\in M}\|\beta\|_{g(x)}$. 
		
		Let $h:\mathbb{R} \to \mathbb{R}$ be a continuous function. 
		\begin{description}
			\item[\textbf{($\mathbf{H}$)}] For each $s\in \mathbb{R}$, let $\displaystyle H(s)=\intop_0^s h(t)\,\mathrm{d}t$, and we assume that:\begin{itemize}
				\item[($h_1$)] there exist  $1<r<p\leq q<p^*$ and $c_1,c_2>0$  such 
				that $$\ds |h(t)|\leq c_1|t|^{q-1}+c_2|t|^{r-1},\ \forall t\in \mathbb{R};$$
				\item[($h_2$)] \label{h2}$\ds \liminf_{s\to0}\frac{H(s)}{|s|^p}=+\infty$.
			\end{itemize}
		\end{description}
%		On the function $\alpha$ we assume that:
%		\begin{description}
%			\item[\textbf{($\mathbf{\alpha}$)}]\label{alpha} $\alpha:M \to \mathbb{R}$ non-negative continuous function which depends on $d_F(x_0,\cdot)$, for $x_0\in M$, i.e. $\alpha(x)=\alpha_{0}(d_F(x_0,x))$ and assume that $\alpha_0$ is a non-increasing function and  $$\alpha_{0}(s)\sinh\left(|k| \frac{s}{1-a}\right)^{d-1}\sim \frac{1}{s^\gamma},$$ for some $\gamma>1$, whenever $s\to \infty$.
%		\end{description} 
		
		\newpage
		
		Our main result reads as follows:
		\begin{theorem}\label{main}Let $(M,F)$ be a $d$-dimensional Randers space endowed with the Finsler metric \eqref{Randers_metric} such that $\ds a:=\sup_{x\in M}\|\beta\|_g(x)<1$  and $(M,g)$ is a complete non-compact Riemannian manifold of bounded geometry, i.e.  ${\rm 
				Ric}_{(M,g)}\geq k (d-1)$  with $k< 0$, having positive injectivity radius. Let $G$ be a coercive, compact  connected subgroup of $\mathrm{Isom}_F(M)$ such that $\mathrm{Fix}_M(G)=\{x_0\}$ for some $x_0\in M$. Let $h:\mathbb{R} \to \mathbb{R}$ be a continuous function verifying (\textbf{H}), and  let $\alpha:M \to \mathbb{R}$ be $\alpha:M \to \mathbb{R}$ non-negative continuous function which depends on $d_F(x_0,\cdot)$, for $x_0\in M$, i.e. $\alpha(x)=\alpha_{0}(d_F(x_0,x))$ and assume that $\alpha_0$ is a non-increasing function and  $$\alpha_{0}(s)\sinh\left(|k| \frac{s}{1-a}\right)^{d-1}\sim \frac{1}{s^\gamma},$$ for some $\gamma>1$, whenever $s\to \infty$..  Then for every $\mu>0$, there exists $\lambda_*>0$ such that for every $\lambda<\lambda_*$ the problem ($\mathscr{P}_{\lambda,\mu}$) has a non-zero $G$-invariant weak solution. 
			
		\end{theorem}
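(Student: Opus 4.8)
The plan is to produce a non-trivial \emph{local minimizer} of $E_{\lambda,\mu}$ inside the closed subspace $W^{1,p}_{F,G}(M)$ of $G$-invariant functions, following the variational scheme of \cite{FF_Collectanea}. Since $\alpha$ depends only on $d_F(x_0,\cdot)$ with $\{x_0\}=\mathrm{Fix}_M(G)$ and every element of $G\subset\mathrm{Isom}_F(M)$ preserves $F^*$ and $dV_F$, the functional $E_{\lambda,\mu}$ is $G$-invariant; by the principle of symmetric criticality it therefore suffices to find a non-zero critical point of the restriction $E_{\lambda,\mu}|_{W^{1,p}_{F,G}(M)}$. The hypothesis $a<1$ guarantees that $\W$, and hence its closed (thus weakly closed) $G$-invariant subspace, is a reflexive Banach space on which $E_{\lambda,\mu}$ is of class $C^1$, so the direct method is available.

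First I would fix a radius $\rho>0$ small enough that the ``critical'' part $u\mapsto \frac1p\int_M(F^{*p}(x,Du)+|u|^p)\dv-\frac{\mu}{p^*}\int_M|u|^{p^*}\dv$ is sequentially weakly lower semicontinuous on the closed ball $\bar B_\rho=\{u:\|u\|\le\rho\}$. This is exactly where Lemma~\ref{fontosLemma} enters: combined with the Sobolev embedding on Randers spaces, the general inequality lets the gradient term absorb the otherwise weakly upper semicontinuous critical term on sufficiently small balls. Next I would show that the perturbation $u\mapsto\int_M\alpha(x)H(u)\dv$ is sequentially weakly \emph{continuous} on $W^{1,p}_{F,G}(M)$, using the coercivity and compactness of the $G$-action together with the decay assumption (\hyperref[alpha]{$\alpha$}), whose profile $\alpha_0(s)\sinh(ks/(1-a))^{d-1}\sim s^{-\gamma}$ with $\gamma>1$ is tailored to the volume comparison for $\mathrm{Ric}_{(M,g)}\ge k(d-1)$ on the Randers space; here $(h_1)$ bounds $H$ by subcritical powers so the weighted integral passes to the weak limit. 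Adding a weakly continuous term to a weakly lower semicontinuous one, $E_{\lambda,\mu}$ is weakly lower semicontinuous on $\bar B_\rho$, which is weakly compact, so the infimum $m_\rho:=\inf_{\bar B_\rho}E_{\lambda,\mu}$ is attained at some $u_\lambda$.

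It remains to make $u_\lambda$ non-zero and interior. Fixing a $G$-invariant test function $w$, assumption $(h_2)$ gives $E_{\lambda,\mu}(tw)/t^p\to \frac1p\|w\|^p-\lambda\liminf_{t\to0^+}t^{-p}\int_M\alpha H(tw)\dv=-\infty$ as $t\to0^+$, so $m_\rho<0=E_{\lambda,\mu}(0)$ for \emph{every} $\lambda>0$; in particular $u_\lambda\neq0$. On the sphere $\{\|u\|=\rho\}$, Sobolev and $(h_1)$ yield $E_{\lambda,\mu}(u)\ge \frac1p\rho^p-\frac{\mu}{p^*}c\,\rho^{p^*}-\lambda(c_1'\rho^q+c_2'\rho^r)$, and since $p<p^*$ the first two terms are strictly positive for the chosen small $\rho$; hence there is $\lambda_*=\lambda_*(\mu,\rho)>0$ such that for every $\lambda<\lambda_*$ this lower bound stays positive. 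Then $E_{\lambda,\mu}>0$ on $\{\|u\|=\rho\}$ while $m_\rho<0$, which forces $\|u_\lambda\|<\rho$. Thus $u_\lambda$ is a local minimizer lying in the open ball, so $DE_{\lambda,\mu}(u_\lambda)=0$ on $W^{1,p}_{F,G}(M)$, and symmetric criticality upgrades this to a non-zero $G$-invariant weak solution of $(\mathscr P_{\lambda,\mu})$.

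The main obstacle is precisely the weak lower semicontinuity of the critical term on small balls: unlike the subcritical perturbation, it cannot be recovered from any compact embedding, since $W^{1,p}_{F,G}(M)\hookrightarrow L^{p^*}(M)$ fails to be compact, and this is what Lemma~\ref{fontosLemma} is designed to circumvent. A secondary technical point is establishing, in the Randers setting with $a<1$, the Sobolev-type inequality and the reflexivity of $\W$ underlying the direct method, together with the compactness of the $G$-invariant embedding; these I would take from the structural results of the paper.
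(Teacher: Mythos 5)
Your proposal is correct and follows essentially the same route as the paper: weak lower semicontinuity of the critical part on small balls via Lemma \ref{fontosLemma}, sequential weak continuity of the perturbation through the compact $G$-invariant embedding, the principle of symmetric criticality, direct minimization on a small closed ball, and non-triviality from $(h_2)$ applied to small multiples of a truncated-cone test function. The only cosmetic difference is that you exclude boundary minimizers by showing $E_{\lambda,\mu}>0$ on the sphere $\{u:\|u\|_F=\rho\}$ while the infimum over the ball is negative, whereas the paper packages the same Sobolev estimates as the condition $\varphi_{\mu,\lambda}(\rho)<\frac{1}{p}$ on the difference quotients of $\psi_{\mu,\lambda}$; both yield the same threshold $\lambda_*$.
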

	The organization of the paper is the following.  In Section \ref{sec:background} we present some preliminary results and notions from Riemann and Finsler geometry, while  Section \ref{Proofs} is devoted to the proof of Theorem \ref{main}.  At the end, in Section  \ref{finalremarks}, we shall present our result also on Finsler--Hadamard manifolds..
		\section{Mathematical background}\label{sec:background}
		\subsection{Elements from Riemannian geometry}
		Let $(M,g)$ be a homogeneous complete non-compact Riemannian manifold with $\mathrm{dim}M=d$.  
		Let $T_xM$ be the tangent space at $x \in M$, $\displaystyle TM =
		\bigcup_{x\in M}T_xM$ be the tangent bundle, and $d_g : M \times M
		\to [0, +\infty)$ be the distance function associated to the
		Riemannian metric $g$. Let $B_g(x, \rho) = \{y \in M : d_g(x, y) <
		\rho \}$ be the open metric ball with center $x$ and radius $\rho
		> 0$. If $\mathrm{d}v_g$ is the canonical volume element on $(M, g)$, the
		volume of a bounded open set $\Omega \subset M$ is 
		$\mathrm{Vol}_g(\Omega) = \displaystyle\int_{\Omega} \, \mathrm{d}v_g= \mathcal H^d(\Omega)$.
		If ${\text d}\sigma_g$ denotes
		the $(d-1)-$dimensional Riemannian measure induced on $\partial \Omega$
		by $g$, then
		$$\mathrm{Area}_g(\partial \Omega)=\displaystyle\int_{\partial \Omega} {\text d}\sigma_g=\mathcal H^{d-1}(\partial \Omega)$$ stands for the area of $\partial \Omega$ with
		respect to the metric $g$.
		
		For every $p>1$, the norm of $L^p(M)$ is given by
		$$\|u\|_{L^p(M)}=\left(\displaystyle\int_M |u|^p\,\mathrm{d}v_g\right)^{1/p}.$$ Let $u:M\to \mathbb R$ be a function of
		class $C^1.$ If $(x^i)$ denotes the local coordinate system on a
		coordinate neighbourhood of $x\in M$, and the local components of
		the differential of $u$ are denoted by 
		$u_i=\frac{\partial	u}{\partial x_i}$, then the local components of the gradient  $\nabla_g u$ are
		$u^i=g^{ij}u_j$. Here, $g^{ij}$ are the local components of
		$g^{-1}=(g_{ij})^{-1}$. In particular, for every $x_0\in M$ one has
		the eikonal equation
		\begin{equation}\label{dist-gradient}
		|\nabla_g d_g(x_0,\cdot)|=1\ {\rm a.e. \ on}\ M.
		\end{equation}
		%The Laplace-Beltrami operator is given by 
		%$\Delta_g u={\rm div}(\nabla_g u)$, whose expression in a local chart of associated
		%coordinates $(x^i)$ is 
		%$$\Delta_g u=g^{ij}\left(\frac{\partial^2 u}{\partial x_i\partial x_j} 
		%- \Gamma_{ij}^k\frac{\partial u}{\partial x_k}\right),$$ 
		%where $\Gamma_{ij}^k$ are the coefficients of the
		%Levi-Civita connection.
		
		When no confusion arises, if $X,Y\in T_x M$, we simply write $|X|$ and
		$\langle X,Y\rangle$ instead of the norm $|X|_x$ and inner product
		$g_x(X,Y)=\langle X,Y\rangle_x$, respectively. 
		
		The $L^p(M)$ norm of $\nabla_g u: M \to TM$ is given by
		$$\|\nabla_g u\|_{L^p(M)}=\left(\displaystyle\int_M |\nabla_gu|^p{\rm d}v_g\right)^\frac{1}{p}.$$  
		The space $W^{1,p}_g(M)$ is the completion of $C_0^\infty(M)$ with respect to the norm
		$$\|u\|^p_{W^{1,p}_g(M)}={\|u\|_{L^p(M)}^p+\|\nabla_g u\|_{L^p(M)}^p}.$$

		\subsection{Elements from Finsler geometry}
		Let $M$ be a connected $n$-dimensional $C^{\infty}$ manifold and
		$TM=\bigcup_{x\in M}T_{x}M$ is its tangent bundle, the pair $(M,F)$
		is a Finsler manifold if the continuous function $F:TM\to[0,\infty)$
		satisfies the conditions:
		\begin{itemize}
			\item $F\in C^{\infty}(TM\setminus\{0\});$
			\item $F(x,ty)=tF(x,y)$ for all $t\geq0$ and $(x,y)\in TM;$
			\item $g_{ij}(x,y):=\left[\frac{1}{2}F^{2}(x,y)\right]_{y^{i}y^{j}}$ is
			positive definite for all $(x,y)\in TM\setminus\{0\}.$
		\end{itemize}
		If $F(x,ty)=|t|F(x,y)$ for all $t\in\mathbb{R}$ and $(x,y)\in TM,$
		we say that the Finsler manifold $(M,F)$ is reversible. 	Clearly, the Randers
		metric $F$ is symmetric, i.e.
		$F(x,-y)=F(x,y)$ for every $(x,y)\in TM,$ if and only if $\beta=0$ (which means that $(M,F)=(M,g)$ is the original Riemannian manifold).

		Let $\pi ^{*}TM$ be the pull-back bundle of the tangent bundle $TM$
		generated by the natural projection $\pi:TM\setminus\{ 0 \}\to M,$ see Bao,
		Chern and Shen \cite[p. 28]{BaoChernShen}. The vectors of the pull-back bundle $\pi
		^{*}TM$ are denoted by $(v;w)$ with $(x,y)=v\in TM\setminus\{ 0 \}$ and $%
		w\in T_xM.$ For simplicity, let $\partial_i|_v=(v;\partial/\partial x^i|_x)$
		be the natural local basis for $\pi ^{*}TM$, where $v\in T_xM.$ One can
		introduce on $\pi ^{*}TM$ the \textit{fundamental tensor} $\mathfrak{g}$
		%and {\it Cartan tensor} $A$
		by
		\begin{equation}  \label{funda-Cartan-tensors}
		\mathfrak{g}_v:=\mathfrak{g}(\partial_i|_v,\partial_j|_v)=\mathfrak{g}_{ij}(x,y)
		\end{equation}
		%respectively,
		where $v=y^i{(\partial}/{\partial x^i})|_x.$ 
		
		Let $u,v\in T_xM$ be two non-collinear vectors and $\mathcal{S}=\mathrm{span}%
		\{u,v\}\subset T_xM$. By means of the curvature tensor $R$, the \textit{flag
			curvature} of the flag $\{\mathcal{S},v\}$ is defined by
		\begin{equation}  \label{ref-flag}
		\mathbf{K}(\mathcal{S};v) =%
		\frac{\mathfrak{g}_v(R(U,V)V, U)}{\mathfrak{g}_v(V,V) \mathfrak{g}_v(U,U) - \mathfrak{g}_v(U,V)^{2}},
		\end{equation}
		where $U=(v;u),V=(v;v)\in \pi^*TM.$ If $(M,F)$ is Riemannian, the flag
		curvature reduces to the well known sectional curvature. If $\mathbf{K}(%
		\mathcal{S};v)\leq 0$ for every choice of $U$ and $V$, we say that $(M,F)$
		has \textit{non-positive flag curvature}, and we denote by $\mathbf{K}\leq 0$%
		. $(M,F)$ is a \textit{Finsler-Hadamard manifold} if it is simply connected,
		forward complete with $\mathbf{K}\leq 0$.
		
		Let $\{e_i\}_{i=1,...,n}$ be a basis for $T_xM$ and $\mathfrak{g}_{{ij}}^v=\mathfrak{g}_v(e_i,e_j)$%
		. The \textit{mean distortion} $\zeta:TM\setminus \{0\}\to (0,\infty)$ is
		defined by $\zeta(v)=\frac{\sqrt{\mathrm{det}(g_{ij}^v)}}{\sigma_F}$. The
		\textit{mean covariation} $\mathbf{S}:TM\setminus\{0\}\to \mathbb{R}$ is
		defined by
		\begin{equation*}
		\mathbf{S}(x,v)=\frac{\mathrm{d}}{\mathrm{d}t}(\ln \zeta(\dot\sigma_v(t)))\big|%
		_{t=0},
		\end{equation*}
		where $\sigma_v$ is the geodesic such that $\sigma_v(0)=x$ and $\dot
		\sigma_v(0)=v.$ %The pointwise norm function of mean covariation
		%${\bf S}$ is defined by $$\|{\bf S}\|_x=\sup_{v\in T_xM\setminus
		%\{0\}}\frac{{\bf S}(x,v)}{F(x,v)}.$$ Finally, $\|{\bf
		%S}\|=\sup_{x\in M}\|{\bf S}\|_x.$
		We say that $(M,F)$ has \textit{vanishing mean covariation} if $\mathbf{S}%
		(x,v)= 0$ for every $(x,v)\in TM$, and we denote by $\mathbf{S}=0$.

	Let $\sigma: [0,r]\to M$ be a piecewise $C^{\infty}$ curve. The
	value $ L_F(\sigma)= \displaystyle\int_{0}^{r} F(\sigma(t),
	\dot\sigma(t))\,{\text d}t $ denotes the \textit{integral length} of
	$\sigma.$  For $x_1,x_2\in M$, denote by $\Lambda(x_1,x_2)$ the set
	of all piecewise $C^{\infty}$ curves $\sigma:[0,r]\to M$ such that
	$\sigma(0)=x_1$ and $\sigma(r)=x_2$. Define the {\it distance
		function} $d_{F}: M\times M \to[0,\infty)$ by
	\begin{equation}\label{quasi-metric}
	d_{F}(x_1,x_2) = \inf_{\sigma\in\Lambda(x_1,x_2)}
	L_F(\sigma).
	\end{equation}
	One clearly has that $d_{F}(x_1,x_2) =0$ if and only if $x_1=x_2,$
	and that $d_F$ verifies the triangle inequality.
	
	The {\it Hausdorff  volume form} ${\text d}V_F$ on the Randers space $(M,F)$ is given by
	\begin{equation}\label{randers-volume-def}
	{\text d}V_F(x)=\left(1-\|\beta\|^2_g(x)\right)^\frac{d+1}{2}\mathrm{d}v_g,
	\end{equation}
	where $dv_g$ denotes the canonical Riemannian volume form induced by $g$ on $M$.
	
	For every $(x,\alpha)\in T^*M$, the  {\it polar transform} (or,
	co-metric) of $F$ from (\ref{Randers_metric}) is
	\begin{equation}\label{polar-transform}
	F^*(x,\alpha) = \sup_{y\in T_xM\setminus
		\{0\}}\frac{\alpha(y)}{F(x,y)}=\frac{\sqrt{g_x^{*2}(\alpha,\beta)+(1-\|\beta\|_g^2(x))\|\alpha\|_g^2(x)}-g_x^{*}(\alpha,\beta)}{1-\|\beta\|_g^2(x)}.
	\end{equation}
	
	Let $u:M\to \mathbb{R}$ be a differentiable function in the distributional sense. The \textit{gradient} of $u$ is defined by
	\begin{equation}  \label{grad-deriv}
	\boldsymbol{\nabla}_F u(x)=J^*(x,Du(x)),
	\end{equation}
	where $Du(x)\in T_x^*M$ denotes the (distributional) \textit{derivative} of $u$ at $x\in M$ and $J^*$ is the Legendre transform  given by $$J^*(x,y):=\frac{\partial}{\partial y}\left(\frac{1}{2}F^{*2}(x,y)\right).$$  In local coordinates, one has
	\begin{equation}  \label{derivalt-local}
	Du(x)=\sum_{i=1}^n \frac{\partial u}{\partial x^i}(x)\mathrm{d}x^i,  \ \boldsymbol{\nabla}_F u(x)=\sum_{i,j=1}^n h_{ij}^*(x,Du(x))\frac{\partial u}{\partial x^i}(x)\frac{\partial}{\partial x^j}.
	\end{equation}

	In general, note that $u\mapsto\boldsymbol{\nabla}_F u $ is not linear. If $x_0\in M$ is
	fixed, then due to \cite{Otha-Sturm}, for a.e. $x\in M$ one has 
	\begin{equation}  \label{tavolsag-derivalt}
	F^*(x,D d_F(x_0,x))=F(x,\boldsymbol{\nabla}_F d_F(x_0,x))=D d_F(x_0,x)(%
	\boldsymbol{\nabla}_F d_F(x_0,x))=1
	\end{equation}
	Let $p>1.$ The norm of $L^p(M)$ is given by
	$$\|u\|_{p}=\left(\displaystyle\int_M |u|^p\,\mathrm{d}V_F(x)\right)^{1/p}.$$
	Let $X$ be a vector field on $M$. In a local coordinate system $(x^i)$  the \textit{divergence} is defined by div$(X)=\frac{1}{\sigma_F}\frac{\partial}{\partial x^i}(\sigma_F X^i),$ where 
	$$\sigma_F(x)=\frac{\omega_{d}}{\mathrm{Vol}(\{y=(y^i):\ F(x,y^i\frac{\partial }{\partial x^i})<1\})}.$$ 
	
	%The \textit{Finsler-Laplace operator} is defined by
	%\begin{equation*}
	%\boldsymbol{\Delta}_F u=\mathrm{div}(\boldsymbol{\nabla}_F u),
	%\end{equation*}

	The Finsler $p$-Laplace operator is defined by $$\boldsymbol{\Delta}_{F,p}u=\mathrm{div}({F^*}^{p-2}(Du) \cdot \boldsymbol{\nabla}_F u),$$
	while the Green theorem reads as: for every $v\in C_0^\infty(M)$,
	\begin{equation}  \label{Green}
	\int_M v\boldsymbol{\Delta}_{F,p} u \,{\mathrm d}V_F(x)=-\int_M
	{F^*}^{p-2}(Du) Dv(\boldsymbol{\nabla}_F u)\,{\text d}V_F(x),
	\end{equation}
	see   \cite{Otha-Sturm} and 
	\cite{Shen_Konyv} for $p=2$. Note that in general 
	$\boldsymbol{\Delta}_{F,p} (-u) \neq -\boldsymbol{\Delta}_{F,p} u.$
	When $(M,F)=(M,g)$, the Finsler-Laplace operator is the usual Laplace-Beltrami operator, 
	%	while for a Minkowski space 
	%	$(\mathbb{R}^n,F)$,  
	%	$\boldsymbol{\Delta}_Fu=\mathrm{div} (F^*(Du)\nabla F^*(Du))=\mathrm{div} (F(\nabla u)\nabla F(\nabla u))$ 
	%	is precisely the Finsler-Laplace operator considered
	%	by \cite{Cianchi-Salani}.
	
	We introduce the function space
	associated with $\left(M,F\right)$, namely let 
	$$W^{1,p}_F(M)=\left\{u\in W^{1,p}_\mathrm{loc}(M): \int_M {F^*}^p(x,Du(x))\mathrm{d}V_F(x)<+\infty \right\}$$ 
	be the closure of $C^\infty(M)$ with respect to the (asymmetric) norm 
	$$\|u\|_{F}=\left(\int_M {F^*}^p(x,Du(x))\,\mathrm{d}V_F(x)+\int_M |u(x)|^p\,\mathrm{d}V_F(x) \right)^{\frac{1}{p}}.$$ 
	
	Note that, when $M$ is a Randers-space endowed with the Finsler metric defenied in \eqref{Randers_metric}, such that $\sup_{x\in M}\|\beta\|_{g}(x)<1$, then the function space $W^{1,p}_F(M)$ is a Sobolev space, see \cite{FarkasKristalyVarga_Calc,KristalyRudas_NATMA}.

	We notice that  the \textit{reversibility constant} associated with $F$ (see (\ref{Randers_metric}) is given by 
	\begin{equation}  \label{reverzibilis}
	r_{F}=\sup_{x\in M}r_F(x)\ \ \ \mathrm{where}\ \ \ r_F(x)=\sup_{\substack{ y \in T_x M\setminus \{0\}}} \frac{F(x,y)}{F(x,-y)}=\frac{1+\|\beta\|_g(x)}{1-\|\beta\|_g(x)},
	\end{equation}
	see \cite{Rademacher, Yuan-Zhao}.
	%Note that $r_{F}\geq 1$ (possibly, $r_{F}=+\infty$), and $r_{F}= 1$ if
	%and only if $(M,F)$ is Riemannian. 
	%In the same way, we define the constant $r_{F^*}$ associated with $F^*$ and one has $r_{F^*}=r_F.$
	In a similar way, one can define the  \textit{uniformity constant} of $F$ by:
	\begin{equation} \label{uniformity_const}
	l_{F}=\inf_{x\in M}l_F(x)\ \ \ \mathrm{where}\ \ \ l_F(x)= \inf_{y,v,w\in
		T_xM\setminus \{0\}}\frac{g_{(x,v)}(y,y)}{g_{(x,w)}(y,y)}=\left(\frac{1-\|\beta\|_g(x)}{1+\|\beta\|_g(x)}\right)^2,
	\end{equation}
	see \cite{Egloff,FarkasKristalyVarga_Calc,Otha-Sturm}.  The definition of $l_{F}$ in turn shows that
	\begin{equation}  \label{eq:2uni}
	F^{*2}( x,{t\xi+(1-t)\beta} ) \le tF^{*2}(x,\xi) +(1-t)F^{*2}(x,\beta)
	-{l_{F}}t(1-t) F^{*2}(x,\beta-\xi)
	\end{equation}
	for all $x\in M$, $\xi,\beta \in T_x^*M$ and $t\in [0,1]$.

	\subsection{Sobolev embeddings on Randers spaces} 	 As we already mentiond in our case the function space $W^{1,p}_F(M)$ is a Sobolev space. In this section  we focus on the (compact) embedding $W^{1,p}_F(M)\hookrightarrow L^q(M)$. 
	
	% Even in $\mathbb{R}^d$ it is well known that  the  compactness of the embedding $W^{1,p}(\mathbb{R}^d)\hookrightarrow L^q(\mathbb{R}^d)$ need not 
	% hold, see \cite{Adams}. On the other hand, it was proved by Berestycki–Lions (see \cite{BL} and \cite{Lions}, and also \cite{Strauss,Willem}) that if $\ p\leq d$ then the embedding $W^{1,p}_{\mathrm{rad}}(\mathbb{R}^d)\hookrightarrow L^q(\mathbb{R}^d)$ is compact  whenever $p<q<p^*$, where $W^{1,p}_{\mathrm{rad}}(\mathbb{R}^d)$ stands for the subspace of radially symmetric functions of $W^{1,p}(\mathbb{R}^d)$. 
	
	% The Berestycki–Lions-type theorem has  been  established on Riemannian manifolds by  \cite{HebeyVaugon}, see also \cite[Theorems 9.5 \& 9.6]{Hebey-konyv1}. More precisely, if $G$ is a compact subgroup of the group of global isometries of the complete Riemannian manifold $(M, g)$ then (under some additional assumptions on the geometry of $(M, g)$ and some assumptions on the orbits under the action of $G$),
	% the embedding $W^{1,p}_G(M)\hookrightarrow L^q(M)$ turns out to be compact, where $W^{1,p}_G(M)$ denotes the set of $G$-invariant functions of the Sobolev space $W^{1,p}(M)$.  Such compactness results have been extended to non-compact metric measure spaces as well, see \cite{Gorka}, and generalized to Lebesgue–Sobolev spaces $W_G^{1,p(\cdot)}(M)$ in the setting of complete Riemannian manifolds, see \cite{LengyelekJFA}.   
	
	Recently, in \cite{SkrzypczakTintarev} the authors proved that, if $(M,g)$ is a $d$-dimensional homogeneous Hadamard manifold and $G$ is a compact connected subgroup of the group of global isometries of $(M, g)$ such that $\mathrm{Fix}_M(G)$ is a singleton, then the subspace of the $G$-invariant functions of $W_g^{1,p}(M)$ is compactly embedded into $L^q(M)$ whenever $p\leq d$ and $p<q<p^*$.  
	For further use we use the following definition
	\begin{definition}
		We say that a continuous action of a group $G$ on a complete Riemannian manifold $M$ is coercive if for every $t > 0$, the set $$O_t=\{x\in M: \mathrm{diam}Gx<t\}$$ is bounded.
	\end{definition}
	
	It was also pointed out that, see \cite{SkrzypczakTintarev,TintarevKonyvs},  the condition of a single fixed point is restrictive, because on manifolds where Sobolev embeddings hold, one has a necessary and sufficient condition in terms of coercivity condition on the group $G$.  According to this, for proving a compact emnedding result for Sobolev spaces defined on Randers spaces, we use the following result, see \cite{skrzypczak2020compact,TintarevKonyvs}:
	\begin{customthm}{A}[Theorem 7.10.12, \cite{TintarevKonyvs}]\label{RiemannBeagyazas}Let $G$ be a compact, connected group of isometries of a $d$-dimensional non-compact connected Riemannian manifold $M$ of bounded geometry. Let $1<p<d$ and $p<q<p^*$. Then the subspace $W_G^{1,p}(M)$ is compactly embedded into $L^q(M)$ if and only if $G$ is coercive.	
	\end{customthm}
	\begin{remark}
		Based on \cite{FKM_Morrey}, one could expect an alternative proof of the aforementioned embedding. Indeed, if we assume that there exists $\kappa=\kappa(G,d) > 0$  such that for every $y\in M$ with $d_g(x_0,y) \geq 1$, one has
		$$\mathcal H^{l}(\mathcal{O}_G^y) \geq \kappa \cdot d_g(x_0,y),$$ 
		where $l=l(y)=\dim \mathcal{O}_G^y \geq 1$, where $\mathcal{O}_G^y=\{\tau(x):\tau \in G\}$, then $W_G^{1,p}(M)$ is compactly embedded into $L^q(M)$. For the proof, see \cite[Theorem 4.1]{FKM_Morrey}.
	\end{remark}
	
	Denote by $W^{1,p}_{F,G}(M)$ the subspace of $G$-invariant functions of $W^{1,p}_{F}(M)$, where $G$ is a subgroup of $\mathrm{Isom}_F(M)$, such that $G$ is coercive. In this case we have the following result:
	\begin{customthm}{B}[Theorem 1.3.,  \cite{FKM_Morrey}]\label{beagyazas}
		Let $(M,F)$ be a $d$-dimensional Randers space endowed with the Finsler metric \eqref{Randers_metric}, such that $(M,g)$ is a $d$-dimensional non-compact connected Riemannian manifold $M$ of bounded geometry, and let  $G$ be a coercive compact connected subgroup of $\mathrm{Isom}_F(M)$.
		If $1<p<d$, $q\in (p,p^*)$ and $\displaystyle \sup_{x\in M}\|\beta\|_g(x)<1$, then the embedding $W^{1,p}_{F}(M)\hookrightarrow L^q(M)$ is continuous, while the embedding  $W^{1,p}_{F,G}(M)\hookrightarrow L^q(M)$ is compact. 
	\end{customthm}
	First, recall that $0<a:=\sup_{x\in M}\|\beta\|_g(x)<1$, then the proof of the previous theorem is based on the following two inequalities
	\begin{equation}\label{randers-volume}
		(1-a^2)^\frac{d+1}{2}{\text d}v_g\leq {\text d}V_F(x)\leq {\text d}v_g,
	\end{equation}
and \begin{equation}\label{becslesek}
	\frac{\|\xi\|_g(x)}{1+a}\leq F^*(x,\xi)\leq \frac{\|\xi\|_g(x)}{1-a}.
\end{equation}

		\section{Proof of the main result}\label{Proofs}
		In this section we prove Theorem \ref{main}. First we deal with the lower semicontinuity of the energy functional $${E}_{\lambda,\mu}(u)=\mathscr{F}_\mu(u)-\lambda \mathscr{K}(u),$$ where $$\mathscr{F}_\mu(u)=\frac{1}{p}\int_{M}\left(F^{*^{p}}(x,Du(x))+|u|^{p}\right)\,\mathrm{d}V_{F}(x)-\frac{\mu}{p^{*}}\int_{M}|u|^{p^{*}}\,\mathrm{d}V_{F}(x),$$ and $$\mathscr{K}(u)=\int_{M}\alpha(x)H(u(x))\,\mathrm{d}V_{F}(x).$$ First we prove the Finslerian version of the following inequality (see Lemma \ref{fontosLemma}): if $p\geq2$ then for $a,b\in\mathbb{R}^{N}$, we have that  (see \cite[Lemma 4.2]{Lindqvist}):
		\begin{equation}
		|b|^{p}\geq|a|^{p}+p\langle|a|^{p-2}a,b-a\rangle+2^{1-p}|a-b|^{p}\label{eq:Lindqvist}
		\end{equation}
		Note that, Lemma \ref{fontosLemma} is indispensable for the lower semicontinuity of $\mathscr{F}_\mu$, see Proposition \ref{lsc_kritikus}. 
		
		\begin{lemma}\label{fontosLemma}
			Let $(M,F)$ be a Finsler manifold, then we have the following inequality:
			\[
			p(\beta-\xi)\left(F^{*p-2}(x,\xi)J^{*}(x,\xi)\right)+\frac{l_{F}^{\frac{p}{2}}}{2^{p-1}}F^{*^{p}}(x,\beta-\xi)+F^{*^{p}}(x,\xi)\leq F^{*p}(x,\beta),\ \forall\xi,\beta\in T_{x}^{*}M.
			\]
		\end{lemma}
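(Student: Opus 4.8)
The plan is to read the statement as a quantitative (strongly convex) first–order Taylor inequality for the convex function $\xi\mapsto F^{*p}(x,\xi)$ and to feed it entirely from the $2$–uniform convexity already recorded in \eqref{eq:2uni}; throughout I work in the regime $p\ge2$, as in \eqref{eq:Lindqvist}. First I would rewrite the gradient term. Since $J^{*}(x,\xi)=\tfrac12\,\partial_\xi F^{*2}(x,\xi)$, the chain rule gives $F^{*p-2}(x,\xi)J^{*}(x,\xi)=\tfrac1p\,\partial_\xi F^{*p}(x,\xi)$, so that
\[
p(\beta-\xi)\bigl(F^{*p-2}(x,\xi)J^{*}(x,\xi)\bigr)=\partial_\xi F^{*p}(x,\xi)[\beta-\xi].
\]
Hence the claim is equivalent to the strong–convexity estimate $F^{*p}(x,\beta)-F^{*p}(x,\xi)-\partial_\xi F^{*p}(x,\xi)[\beta-\xi]\ge \tfrac{l_F^{p/2}}{2^{p-1}}F^{*p}(x,\beta-\xi)$. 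The degenerate case $\xi=0$ is immediate: there the gradient term vanishes (by $1$–homogeneity of $J^{*}$) and the inequality reduces to $\tfrac{l_F^{p/2}}{2^{p-1}}\le1$, which holds since $l_F\le1$. Thus I may assume $\xi\neq0$, where $F^{*p}(x,\cdot)$ is $C^{1}$ near $\xi$ and the one–sided directional derivative exists.

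The engine is to raise \eqref{eq:2uni} to the power $p/2$. Writing $\xi_t=(1-t)\xi+t\beta$ and applying \eqref{eq:2uni} followed by the increasing map $s\mapsto s^{p/2}$, I obtain
\[
F^{*p}(x,\xi_t)\le\bigl[(1-t)F^{*2}(x,\xi)+tF^{*2}(x,\beta)-l_F\,t(1-t)F^{*2}(x,\beta-\xi)\bigr]^{p/2}.
\]
The heart of the matter is then the following elementary scalar inequality: for $p\ge2$, $t\in[0,1]$ and all $u,v,w\ge0$ with $(1-t)u+tv-l_F\,t(1-t)w\ge0$,
\[
\bigl[(1-t)u+tv-l_F t(1-t)w\bigr]^{p/2}\le(1-t)u^{p/2}+t\,v^{p/2}-\frac{l_F^{p/2}}{2^{p-2}}\,t(1-t)\,w^{p/2}.
\]
Applying this with $u=F^{*2}(x,\xi)$, $v=F^{*2}(x,\beta)$, $w=F^{*2}(x,\beta-\xi)$ and using $F^{*2}=(F^{*})^{2}$ yields the $p$–uniform convexity
\[
F^{*p}(x,\xi_t)\le(1-t)F^{*p}(x,\xi)+tF^{*p}(x,\beta)-\frac{l_F^{p/2}}{2^{p-2}}\,t(1-t)\,F^{*p}(x,\beta-\xi).
\]

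Finally I would pass to the first–order form. Rearranging the last inequality as
\[
\frac{F^{*p}(x,\xi_t)-F^{*p}(x,\xi)}{t}\le F^{*p}(x,\beta)-F^{*p}(x,\xi)-\frac{l_F^{p/2}}{2^{p-2}}(1-t)F^{*p}(x,\beta-\xi)
\]
and letting $t\to0^{+}$, the left–hand side tends to $\partial_\xi F^{*p}(x,\xi)[\beta-\xi]$, so that
\[
\partial_\xi F^{*p}(x,\xi)[\beta-\xi]\le F^{*p}(x,\beta)-F^{*p}(x,\xi)-\frac{l_F^{p/2}}{2^{p-2}}F^{*p}(x,\beta-\xi),
\]
which implies the claim since $2^{p-2}\le2^{p-1}$ (in fact this route produces the slightly sharper constant $2^{p-2}$). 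The main obstacle is the scalar inequality above: I would reduce it to one variable by homogeneity (normalising $w=1$) and verify it by calculus. The delicate points are the boundary cases where the bracket vanishes or where $u$ (equivalently $\xi$) is small — precisely where the quadratic penalty degenerates and the convexity of $s\mapsto s^{p/2}$ alone must carry the penalty — together with the bookkeeping of the constant, which is where the factor $l_F^{p/2}$ is produced from the single power of $l_F$ appearing at the quadratic level in \eqref{eq:2uni} after raising to the exponent $p/2$.
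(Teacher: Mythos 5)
Your reduction of the lemma to a first--order strong--convexity estimate, and the identity $pF^{*p-2}(x,\xi)J^{*}(x,\xi)=\partial_\xi F^{*p}(x,\xi)$, are both fine, but the argument has a genuine gap: the scalar inequality you isolate as ``the main obstacle'' is false as stated. Take $p=4$, $l_F=1$, $t=\tfrac{1}{10}$, $w=1$ and $u=v=\tfrac{9}{100}$. Then $(1-t)u+tv-l_F\,t(1-t)w=0$, so your left--hand side equals $0$, while the right--hand side is $\bigl(\tfrac{9}{100}\bigr)^{2}-\tfrac14\cdot\tfrac{9}{100}=\tfrac{81}{10000}-\tfrac{225}{10000}<0$; the same configuration also defeats the weaker constant $2^{p-1}$, so this is not a matter of bookkeeping. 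The reduction forgets the vector structure: for $u=F^{*2}(x,\xi)$, $v=F^{*2}(x,\beta)$, $w=F^{*2}(x,\beta-\xi)$ the (possibly non--reversible) triangle inequality forces relations among $u,v,w$ that exclude this configuration, and without them the quadratic penalty in \eqref{eq:2uni} cannot be upgraded to a $p$-th power penalty proportional to $t(1-t)$. Relatedly, the only unconditional way to pass from \eqref{eq:2uni} to $p$-th powers, namely $a^{p}+b^{p}\le(a^{2}+b^{2})^{p/2}$ applied with $b^{2}=l_F\,t(1-t)F^{*2}(x,\beta-\xi)$, produces a penalty $\bigl[l_F\,t(1-t)\bigr]^{p/2}F^{*p}(x,\beta-\xi)$, which is $o(t)$ for $p>2$; so the $t\to0^{+}$ limit you take at the end would lose the penalty term entirely.

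The paper sidesteps both difficulties by working only at $t=\tfrac12$: there \eqref{eq:2uni}, the elementary bound $a^{p}+b^{p}\le(a^{2}+b^{2})^{p/2}$ and the convexity of $s\mapsto s^{p/2}$ give $F^{*p}\bigl(x,\tfrac{\xi+\beta}{2}\bigr)+\tfrac{l_F^{p/2}}{2^{p}}F^{*p}(x,\beta-\xi)\le\tfrac12F^{*p}(x,\xi)+\tfrac12F^{*p}(x,\beta)$, and the gradient term is then inserted by bounding $F^{*p}\bigl(x,\tfrac{\xi+\beta}{2}\bigr)$ from below by its tangent plane at $\xi$ (ordinary convexity of $F^{*p}$), with no limit in $t$ at all. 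If you want to rescue your route you would have to prove the scalar inequality under the additional triangle--inequality constraint on $(u,v,w)$ --- taking the reversibility constant into account --- and separately justify the claimed improved constant $2^{p-2}$; as written, neither is established.
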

		
		\begin{proof}
			From (\ref{eq:2uni}) with the choice $\ds t=\frac{1}{2},$ one
			has that 
			\begin{equation}
			F^{*2}\left(x,\frac{\xi+\beta}{2}\right)\le\frac{F^{*2}(x,\xi)+F^{*2}(x,\beta)}{2}-\frac{l_{F}}{4}F^{*2}(x,\beta-\xi).\label{eq:l_F_2}
			\end{equation}
			Now, it is clear that if $p\geq2,$ then $p\mapsto\left(a^{p}+b^{p}\right)^{\frac{1}{p}}$
			is non-increasing, moreover by Hölder inequality , one has that 
			\[
			\left(a^{p}+b^{p}\right)^{\frac{1}{p}}\leq2^{\frac{1}{p}-\frac{1}{2}}\left(a^{2}+b^{2}\right)^{\frac{1}{2}}\leq\left(a^{2}+b^{2}\right)^{\frac{1}{2}}.
			\]
			Thus, applying this inequality, one has that 
			\begin{align*}
			F^{*p}\left(x,\frac{\xi+\beta}{2}\right)+\frac{l_{F}^{\frac{p}{2}}}{2^{p}}F^{*^{p}}(x,\beta-\xi) & \leq2^{\frac{1}{p}-\frac{1}{2}}\cdot\left(F^{*2}\left(x,\frac{\xi+\beta}{2}\right)+\frac{l_{F}}{4}F^{*2}(x,\beta-\xi)\right)^{\frac{p}{2}} \\&\overset{\eqref{eq:l_F_2}}{\leq}\left(\frac{F^{*2}(x,\xi)+F^{*2}(x,\beta)}{2}\right)^{\frac{p}{2}}.
			\end{align*}
			On the other hand, by convexity we have that 
			\[
			\left(\frac{F^{*2}(x,\xi)+F^{*2}(x,\beta)}{2}\right)^{\frac{p}{2}}\leq\frac{1}{2}F^{*p}(x,\xi)+\frac{1}{2}F^{*p}(x,\beta).
			\]
			
			On the other hand, by convexity 
			\[
			F^{*^{p}}\left(x,\frac{\xi+\beta}{2}\right)\geq F^{*p}(x,\xi)+\frac{p}{2}(\beta-\xi)\left(F^{*p-2}(x,\xi)J^{*}(x,\xi)\right),
			\]
			thus 
			\[
			F^{*p}\left(x,\frac{\xi+\beta}{2}\right)+\frac{l_{F}^{\frac{p}{2}}}{2^{p}}F^{*^{p}}(x,\beta-\xi)\geq F^{*p}(x,\xi)+\frac{p}{2}(\beta-\xi)\left(F^{*p-2}(x,\xi)J^{*}(x,\xi)\right)+\frac{l_{F}^{\frac{p}{2}}}{2^{p}}F^{*^{p}}(x,\beta-\xi).
			\]
			
			Putting toghether, one has that 
			\[
			\frac{p}{2}(\beta-\xi)\left(F^{*p-2}(x,\xi)J^{*}(x,\xi)\right)+\frac{l_{F}^{\frac{p}{2}}}{2^{p}}F^{*^{p}}(x,\beta-\xi)+\frac{1}{2}F^{*^{p}}(x,\xi)\leq\frac{1}{2}F^{*^{p}}(x,\beta),
			\]
			or for every $\xi,\beta\in T_{x}^{*}M$ the following inequality hold true
			\begin{equation}
			p(\beta-\xi)\left(F^{*p-2}(x,\xi)J^{*}(x,\xi)\right)+\frac{l_{F}^{\frac{p}{2}}}{2^{p-1}}F^{*^{p}}(x,\beta-\xi)+F^{*^{p}}(x,\xi)\leq F^{*p}(x,\beta),\label{eq:ezkell}
			\end{equation}
			which proves the Lemma.
		\end{proof}
		
		Inequality \eqref{eq:ezkell} has been proved by Xia on compact Finsler manifold, see \cite[Lemma 3.1]{Xia}

		\begin{remark}
			In the case of Minkowski spaces the inequality (\ref{eq:ezkell})
			reads as
			\[
			F^{*p}(\beta)\geq F^{*p}(\xi)+pF^{*p-1}(\xi)\langle\nabla F^{*}(\xi),\beta-\xi\rangle+\frac{l_{F}^{\frac{p}{2}}}{2^{p-1}}F^{*p}(\beta-\xi),
			\]
			
			where 
			\[
			l_{F}=\min\left\{ \left\langle \nabla^2\left(\frac{F^{2}}{2}\right)(x)y,y\right\rangle :\ F(x)=F(y)=1\right\} .
			\]
			
		\end{remark}
		
		Furthermore we denote by $\kappa_{p^*}$ the inverse  of the Sobolev embedding constant of $W^{1,p}_F(M)\hookrightarrow L^{p^*}(M)$, that is, $$(\kappa_{p^*})^{-1}=\inf_{u\neq 0}\frac{\|u\|_F}{\|u\|_{p^*}}.$$

		Now, we are in the position to prove the following proposition:
			\begin{proposition}\label{lsc_kritikus}
		For every $\mu>0$ and for every $0<\rho<\rho^*\equiv\ds\left(\frac{1}{\mu}\frac{p^*}{p}\frac{l_F^{\frac{p}{2}}}{2^{p-1}\kappa_{p^*}^{p^*}}\right)^\frac{1}{p^*-p}$, the restriction of  $\mathscr{F}_\mu$ to $B_\rho:=\{u\in W^{1,p}_F(M):\ \|u\|_F\leq \rho \}$ is sequentially weakly lower semicontinuous.
			
%			\begin{enumerate}
%				\item[(i)] 
%				\item[(ii)] for every $\rho>0$ and for every $0<\mu<\mu^*\equiv\ds\frac{1}{\rho^{p^*-p}}\frac{p^*}{p\kappa_{p^*}^{p^*}} \frac{l_F^{\frac{p}{2}}}{2^{p-1}}$, the restriction of  $\mathscr{F}_\mu$ to $B_\rho(u)$ is sequentially weakly lower semicontinuous.
%			\end{enumerate}
		\end{proposition}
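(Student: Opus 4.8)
The plan is to adapt the by-now classical strategy of Faraci--Farkas \cite{FF_Collectanea} and Squassina \cite{Squassina}, in which the only genuinely non-convex and non-compact term $-\frac{\mu}{p^*}\int_M|u|^{p^*}\,\mathrm{d}V_F$ is tamed by a Brezis--Lieb splitting, while the gradient part is handled not by compactness but by the pointwise convexity estimate of Lemma \ref{fontosLemma}, which here plays the role that \eqref{eq:Lindqvist} plays in the Euclidean setting. First I would fix a sequence $(u_n)\subset B_\rho$ with $u_n\rightharpoonup u$ weakly in $W^{1,p}_F(M)$ and aim at $\mathscr F_\mu(u)\le\liminf_n\mathscr F_\mu(u_n)$. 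Since by \eqref{randers-volume} and \eqref{becslesek} the asymmetric norm $\|\cdot\|_F$ is comparable to the Riemannian Sobolev norm, $W^{1,p}_F(M)$ is isomorphic to a reflexive Banach space and $B_\rho$, being convex and strongly closed, is weakly closed; hence $u\in B_\rho$. Passing to a subsequence realizing the $\liminf$ and using the local Rellich--Kondrachov theorem (weak $W^{1,p}$ convergence yields strong $L^p_{\mathrm{loc}}$ convergence, the embedding $W^{1,p}_F(M)\hookrightarrow W^{1,p}_{g,\mathrm{loc}}(M)$ being continuous), I may also assume $u_n\to u$ a.e. on $M$; note that \emph{global} compactness is not available here, which is exactly why the a.e. convergence must be produced locally.

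Next I would write $w_n:=u_n-u$ and treat the three terms separately. For the two lower-order terms the a.e. convergence together with the uniform bound $\sup_n\|u_n\|_F\le\rho$ licenses the Brezis--Lieb lemma, giving
\[
\int_M|u_n|^{p}\,\mathrm{d}V_F=\int_M|u|^{p}\,\mathrm{d}V_F+\int_M|w_n|^{p}\,\mathrm{d}V_F+o(1),\qquad \int_M|u_n|^{p^*}\,\mathrm{d}V_F=\int_M|u|^{p^*}\,\mathrm{d}V_F+\int_M|w_n|^{p^*}\,\mathrm{d}V_F+o(1).
\]
For the gradient term I would invoke Lemma \ref{fontosLemma} pointwise with $\beta=Du_n(x)$ and $\xi=Du(x)$ and integrate over $M$, obtaining
\[
\int_M F^{*p}(x,Du_n)\,\mathrm{d}V_F\ge\int_M F^{*p}(x,Du)\,\mathrm{d}V_F+\frac{l_F^{p/2}}{2^{p-1}}\int_M F^{*p}(x,Dw_n)\,\mathrm{d}V_F+p\int_M(Dw_n)\bigl(F^{*p-2}(x,Du)J^*(x,Du)\bigr)\,\mathrm{d}V_F.
\]
Because $u\in W^{1,p}_F(M)$ the vector field $F^{*p-2}(x,Du)J^*(x,Du)$ lies in the dual $L^{p'}$, so the last integral is the pairing of a fixed dual element against $Dw_n\rightharpoonup0$ in $L^p(T^*M)$ and tends to $0$. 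Collecting the three estimates yields
\[
\mathscr F_\mu(u_n)\ge\mathscr F_\mu(u)+\frac{1}{p}\frac{l_F^{p/2}}{2^{p-1}}\int_M F^{*p}(x,Dw_n)\,\mathrm{d}V_F+\frac1p\int_M|w_n|^p\,\mathrm{d}V_F-\frac{\mu}{p^*}\int_M|w_n|^{p^*}\,\mathrm{d}V_F+o(1).
\]

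It then remains to check that the \emph{defect} $D_n:=\frac{1}{p}\frac{l_F^{p/2}}{2^{p-1}}\int_M F^{*p}(x,Dw_n)\,\mathrm{d}V_F+\frac1p\int_M|w_n|^p\,\mathrm{d}V_F-\frac{\mu}{p^*}\int_M|w_n|^{p^*}\,\mathrm{d}V_F$ satisfies $\liminf_n D_n\ge0$. Writing $c:=l_F^{p/2}2^{1-p}\le1$ (as $l_F\le1$) and using the critical Sobolev inequality $\|w_n\|_{p^*}^{p^*}\le\kappa_{p^*}^{p^*}\|w_n\|_F^{p^*}$ on $(M,F)$ — which follows from the bounded-geometry Sobolev embedding $W^{1,p}_g(M)\hookrightarrow L^{p^*}(M)$ transported through \eqref{randers-volume}--\eqref{becslesek}, $\kappa_{p^*}$ being its best constant — one bounds $D_n$ from below by
\[
D_n\ge\frac{c}{p}\|w_n\|_F^{p}-\frac{\mu\kappa_{p^*}^{p^*}}{p^*}\|w_n\|_F^{p^*}=\|w_n\|_F^{p}\left(\frac{c}{p}-\frac{\mu\kappa_{p^*}^{p^*}}{p^*}\|w_n\|_F^{p^*-p}\right),
\]
which is non-negative as soon as $\|w_n\|_F\le\rho^*$. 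Thus everything reduces to the quantitative bound $\limsup_n\|w_n\|_F\le\rho^*$.

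I expect this last bound to be the \textbf{main obstacle}. The natural way to obtain it is to feed the same three estimates back into $\|u_n\|_F^p=\int_M F^{*p}(x,Du_n)\,\mathrm{d}V_F+\int_M|u_n|^p\,\mathrm{d}V_F\le\rho^p$: the Brezis--Lieb identity for the $L^p$ part and the convexity inequality for the gradient part give a Pythagorean-type splitting in which $\|u_n\|_F^p$ dominates $\|u\|_F^p$ plus a positive multiple of $\|w_n\|_F^p$ up to $o(1)$, from which $\limsup_n\|w_n\|_F$ is controlled by $\rho$ and hence, since $\rho<\rho^*$, stays in the region where the defect is non-negative. The delicate point is that, unlike the $L^p$ and $L^{p^*}$ terms, the gradient term obeys only a \emph{one-sided} inequality (Lemma \ref{fontosLemma}) and not a Brezis--Lieb equality, since no a.e.\ convergence of the gradients is at hand; consequently the constant relating $\|w_n\|_F$ to $\rho$ must be tracked with care, and it is precisely this bookkeeping, together with the factor $l_F^{p/2}2^{1-p}$ furnished by Lemma \ref{fontosLemma} and the best Sobolev constant $\kappa_{p^*}$, that fixes the threshold $\rho^*$ appearing in the statement. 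Once $\liminf_n D_n\ge0$ is secured, the displayed inequality gives $\liminf_n\mathscr F_\mu(u_n)\ge\mathscr F_\mu(u)$, which is the asserted sequential weak lower semicontinuity of $\mathscr F_\mu$ on $B_\rho$.
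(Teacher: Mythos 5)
Your argument is essentially the paper's own proof: the gradient term is handled pointwise by Lemma \ref{fontosLemma} with the pairing term vanishing by weak convergence, the critical term by the Brezis--Lieb lemma, and the Sobolev constant $\kappa_{p^*}$ together with the factor $l_F^{p/2}2^{1-p}$ produces exactly the threshold $\rho^*$. The only cosmetic difference is that you invoke Brezis--Lieb for the $|u|^p$ term where the paper uses the Lindqvist inequality \eqref{eq:Lindqvist} (which conveniently carries the same constant $l_F^{p/2}2^{1-p}$ into the summed norm inequality), and the ``main obstacle'' you flag --- controlling $\limsup_n\|u_n-u\|_F$ --- is dispatched in the paper precisely by the mechanism you describe, namely feeding the summed inequality $\|u_n\|_F^p-\|u\|_F^p\ge o(1)+l_F^{p/2}2^{1-p}\|u_n-u\|_F^p$ back into the constraint $\|u_n\|_F\le\rho$.
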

		\begin{proof}
			
			Let $u_{n}\rightharpoonup u$ in $W_{F}^{1,p}(M),$ thus we have
			that\\
			${\displaystyle \frac{1}{p}\int_{M}F^{*{p}}(x,Du_{n}(x))\,\mathrm{d}V_{F}(x)-\frac{1}{p}\int_{M}F^{*{p}}(x,Du(x))\,\mathrm{d}V_{F}(x)\geq}$
			\begin{align*}
			& \geq p\int_{M}(Du_{n}(x)-Du(x))\left(F^{*p-2}(x,Du(x))J^{*}(x,Du(x))\right)\,\mathrm{d}V_{F}(x)\\&+\frac{l_{F}^{\frac{p}{2}}}{2^{p-1}}\int_{M}F^{*^{p}}(x,Du_{n}(x)-Du(x))\,\mathrm{d}V_{F}(x).
			\end{align*}
			On the other hand, by (\ref{eq:Lindqvist})
			\begin{align*}
			\int_{M}|u_{n}|^{p}\,\mathrm{d}V_{F}(x)-\int_{M}|u|^{p}\,\mathrm{d}V_{F}(x) & \geq p\int_{M}|u|^{p-2}u(u_{n}-u)\,\mathrm{d}V_{F}(x)+\frac{1}{2^{p-1}}\int_{M}|u_{n}-u|^{p}\,\mathrm{d}V_{F}(x)\\
			& \geq p\int_{M}|u|^{p-2}u(u_{n}-u)\,\mathrm{d}V_{F}(x)+\frac{l_{F}^{\frac{p}{2}}}{2^{p-1}}\int_{M}|u_{n}-u|^{p}\,\mathrm{d}V_{F}(x).
			\end{align*}
			Summing up
			\begin{align*}
			\|u_{n}\|_F^{p}-\|u\|_F^{p}  \geq & p\int_{M}(Du_{n}(x)-Du(x))\left(F^{*p-2}(x,Du(x))J^{*}(x,Du(x))\right)\,\mathrm{d}V_{F}(x)+\\
			& +p\int_{M}|u|^{p-2}u(u_{n}-u)\,\mathrm{d}V_{F}(x)+\frac{l_{F}^{\frac{p}{2}}}{2^{p-1}}\|u_{n}-u\|_F^{p}.
			\end{align*}
			By the well know Brezis-Lieb Lemma one has 
			\[
			\liminf_{n\to\infty}\left(\int_{M}|u_{n}|^{p^{\ast}}\,\mathrm{d}V_{F}(x)-\int_{M}|u|^{p^{\ast}}\,\mathrm{d}V_{F}(x)\right)=\liminf_{n\to\infty}\int_{M}|u_{n}-u|^{p^{\ast}}\,\mathrm{d}V_{F}(x).
			\]
			Putting all together, we have that 
			\begin{align*}
			\liminf_{n\to\infty}\left(\mathscr{F}_\mu(u_{n})-\mathscr{F}_\mu(u)\right)&\geq\liminf_{n\to\infty}\left(\frac{1}{p}\cdot\frac{l_{F}^{\frac{p}{2}}}{2^{p-1}}\|u_{n}-u\|_F^{p}-\frac{\mu}{p^{*}}\int_{M}|u_{n}-u|^{p^{*}}\,\mathrm{d}V_{F}(x)\right)\\ &\geq\liminf_{n\to\infty}\|u_{n}-u\|_F^{p}\left(\frac{1}{p}\cdot\frac{l_{F}^{\frac{p}{2}}}{2^{p-1}}-\frac{\mu}{p^{*}}\kappa_{p^*}^{p^*}\rho^{p^{*}-p}\right)\geq 0,
			\end{align*}
			which concludes the proof.
		\end{proof}	
		 It is worth to mention that this result generalizes  the lower semicontinuity of functionals involving critical Sobolev exponent in both Euclidean and Riemannian cases.

From the assumption (\hyperref[alpha]{$\alpha$}) on can easily observe that $\alpha \in L^\infty(M)$. On the other hand, by the layer cake representation it follows that 
\begin{align*}
\int_{M} \alpha(x)\, \mathrm{d}V_F(x) &= \int_M \alpha_{0}(d_F(x_0,x))\,\mathrm{d}V_F(x)\\
&=\int_0^\infty \mathrm{Vol}_F\left(\left\{x\in M:\, \alpha_{0}(d_F(x_0,x))>t \right\}\right)\,\mathrm{d}t \  \ \  [\mbox{chang. of var. } t=\alpha_0(z)]\\ &= \int_\infty^0 \mathrm{Vol}_F(B_F(x_0,z)) \alpha_{0}'(z)\,\mathrm{d}z=\int_{0}^\infty \mathrm{Vol}_F(B_F(x_0,z)) (-\alpha_{0}(z))\,\mathrm{d}z.
\end{align*}

Since, $(M,F)$ is a Randers space, and $a:=\sup_{x}\Vert \beta \Vert _{g}(x)<1$ one can observe that $$(1-a)d_g(x,y)\leq d_F(x,y)\leq (1+a)d_g(x,y),$$ thus \begin{equation}
\label{gombokegymasban}
B_g\left(x_0,\frac{z}{1+a}\right)\subseteq B_F(x_0,z)\subseteq B_g\left(x_0,\frac{z}{1-a}\right).\end{equation} On account of this inclusion and the Bishop-Gromov inequality (see \cite{Chavel}) and  from the assumption (\hyperref[alpha]{$\alpha$}),  one can see that \begin{align*}
\int_{M} \alpha(x)\, \mathrm{d}V_F(x) =\int_0^\infty \mathrm{Vol}_F(B_F(x_0,z)) (-\alpha_{0}'(z))\,\mathrm{d}z &\leq \int_{0}^{\infty} \mathrm{Vol}_g\left(B_g\left(x_0,\frac{z}{1-a}\right)\right)(-\alpha_0'(z))\,\mathrm{d}z \\ &= \int_{0}^\infty \mathrm{Area}_g \left(B_g\left(x_0,\frac{z}{1-a}\right)\right) \alpha_{0}(z)\,\mathrm{d}z\\ &\leq c \int_{0}^\infty \sinh\left(k\frac{z}{1-a}\right)^{d-1}\alpha_{0}(z)\,\mathrm{d}z<+\infty,
\end{align*}
thus $\alpha\in L^1(M)$.

 We have that the energy is $G$-invariant, more precisely (see  \cite[Lemma 5.1]{FKM_Morrey}):

	\begin{lemma}\label{iso-lemma}
	Let $G$ be a compact connected coercive subgroup of $\mathrm{Isom}_F(M)$ with $\mathrm{Fix}_M(G)=\{x_0\}$ for some $x_0\in M$. Then ${E}_{\lambda,\mu}$ is $G$-invariant, i.e., for every $\tau \in G$ and $u\in W^{1,p}_{F}(M)$ one has ${E}_{\lambda,\mu}(\tau
	 u)={E}_{\lambda,\mu}(u)$. 	
\end{lemma}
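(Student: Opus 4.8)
The plan is to show that each of the three integrals making up $E_{\lambda,\mu}(u)=\mathscr{F}_\mu(u)-\lambda\mathscr{K}(u)$ is left unchanged by the natural action of $G$ on functions, given by $(\tau u)(x)=u(\tau^{-1}(x))$ for $\tau\in G$. The starting point is that every $\tau\in\mathrm{Isom}_F(M)$ preserves the entire Finsler structure: it preserves $F$, hence also its polar transform $F^*$, the induced distance $d_F$, and the Hausdorff volume form $\mathrm{d}V_F$ (the last because, by \eqref{randers-volume-def}, $\mathrm{d}V_F$ is built intrinsically from $F$ via $g$ and $\beta$). In particular, $\tau$ is a $\mathrm{d}V_F$-measure-preserving diffeomorphism, and this single fact will drive almost all of the argument.

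For the two zeroth-order terms I would simply apply the change of variables $y=\tau^{-1}(x)$. Since $\tau$ preserves $\mathrm{d}V_F$, one obtains at once
$$\int_M |\tau u|^s\,\mathrm{d}V_F(x)=\int_M |u(\tau^{-1}(x))|^s\,\mathrm{d}V_F(x)=\int_M |u(y)|^s\,\mathrm{d}V_F(y)$$
for $s\in\{p,p^*\}$. For the gradient term, the chain rule gives $D(\tau u)(x)=Du(\tau^{-1}(x))\circ d(\tau^{-1})_x$, and the isometry property transported to covectors yields $F^*(x,D(\tau u)(x))=F^*(\tau^{-1}(x),Du(\tau^{-1}(x)))$; the same substitution then shows that $\int_M F^{*p}(x,D(\tau u))\,\mathrm{d}V_F=\int_M F^{*p}(x,Du)\,\mathrm{d}V_F$. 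Adding the three contributions gives $\mathscr{F}_\mu(\tau u)=\mathscr{F}_\mu(u)$.

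The only place where the hypothesis $\mathrm{Fix}_M(G)=\{x_0\}$ is genuinely used is the potential term $\mathscr{K}$. Changing variables $x=\tau(y)$ in $\mathscr{K}(\tau u)=\int_M \alpha(x)H(u(\tau^{-1}(x)))\,\mathrm{d}V_F(x)$ produces the factor $\alpha(\tau(y))=\alpha_0(d_F(x_0,\tau(y)))$. Because $\tau$ preserves $d_F$ and fixes $x_0$, one has
$$d_F(x_0,\tau(y))=d_F(\tau(x_0),\tau(y))=d_F(x_0,y),$$
so $\alpha\circ\tau=\alpha$; that is, $\alpha$ is $G$-invariant. Hence $\mathscr{K}(\tau u)=\int_M \alpha(y)H(u(y))\,\mathrm{d}V_F(y)=\mathscr{K}(u)$, which together with the previous paragraph yields $E_{\lambda,\mu}(\tau u)=E_{\lambda,\mu}(u)$.

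I expect the main technical point to be the gradient term: one must justify carefully that the distributional derivative $Du$ transforms covariantly and that the co-metric $F^*$ is invariant under the induced action on $T^*M$, which is precisely the statement that $\tau$ is a Finsler isometry read at the level of cotangent spaces. Everything else reduces either to the $\mathrm{d}V_F$-invariance of $\tau$ or, for $\mathscr{K}$, to the invariance of the single-orbit distance $d_F(x_0,\cdot)$, which is exactly where the single-fixed-point assumption is indispensable; note that coercivity and connectedness of $G$ play no role in this lemma and are needed only for the compactness of the embedding in Theorem~\ref{beagyazas}.
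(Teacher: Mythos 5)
Your proposal is correct and follows essentially the same route as the paper: a change of variables using the $\mathrm{d}V_F$-invariance of $\tau$, the identity $F^*\bigl(\tau(y),Du(y)\circ d\tau^{-1}_{\tau(y)}\bigr)=F^*(y,Du(y))$ for the gradient term (which the paper verifies explicitly from the sup-definition \eqref{polar-transform} of the polar transform together with $F(\tau(y),d\tau_y(z))=F(y,z)$), and the fixed-point argument $d_F(x_0,\tau(y))=d_F(x_0,y)$ for the $\mathscr{K}$ term. Your closing remark that coercivity and connectedness of $G$ are not used in this lemma is also accurate.
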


For further use,  we restrict the energy functional to the space $W^{1,p}_{F,G}(M)$. For simplicity, in the following we denote $$\mathcal{E}_{\lambda,\mu} = {E}_{\lambda,\mu} \rvert_{W^{1,p}_{F,G}(M)}, \  \mbox{ and } \mathscr{K}_G=\mathscr{K}\rvert_{W^{1,p}_{F,G}(M)}.$$

The principle of symmetric criticality of Palais (see Krist\'aly, R\u adulescu and Varga \cite[Theorem
1.50]{KVR-book}) and Lemma \ref{iso-lemma} imply that the critical points of
$\mathcal{E}_{\lambda,\mu }= {E}_{\lambda,\mu} \rvert_{W^{1,p}_{F,G}(M)}$ are also critical points of the original functional ${E}_\lambda$.

		\begin{lemma}\label{lsc2}
			The functional $\mathscr{K}_G:W^{1,p}_{F,G}(M)\to \mathbb{R}$ is weakly lower semicontinuous on $W^{1,p}_{F,G}(M)$.
		\end{lemma}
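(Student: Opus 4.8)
The plan is to prove the stronger statement that $\mathscr{K}_G$ is sequentially weakly \emph{continuous} on $W^{1,p}_{F,G}(M)$, from which the claimed lower semicontinuity follows at once. So I would start from an arbitrary sequence $u_n\rightharpoonup u$ in $W^{1,p}_{F,G}(M)$. The decisive tool is the compact embedding $W^{1,p}_{F,G}(M)\hookrightarrow L^q(M)$ supplied by Theorem \ref{beagyazas} (valid since $p<q<p^*$): a compact linear embedding maps weakly convergent sequences to strongly convergent ones, so $u_n\to u$ in $L^q(M)$, and in particular $\sup_n\|u_n\|_{L^q(M)}<+\infty$. Before estimating differences, I would first record that $\mathscr{K}$ is real valued: integrating $(h_1)$ gives $|H(s)|\leq \frac{c_1}{q}|s|^q+\frac{c_2}{r}|s|^r$, and since $\alpha\in L^1(M)\cap L^\infty(M)$ one has $\alpha\in L^{q/(q-r)}(M)$ by interpolation, so Hölder's inequality bounds $\int_M\alpha\,|H(u)|\,\mathrm{d}V_F(x)$ by a constant times $\|u\|_{L^q(M)}^q+\|u\|_{L^q(M)}^r$, which is finite by the continuous embedding.

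To control $|\mathscr{K}(u_n)-\mathscr{K}(u)|$ I would apply the mean value theorem to $H$: for a.e. $x$ there is $\xi_n(x)$ between $u_n(x)$ and $u(x)$ with $H(u_n)-H(u)=h(\xi_n)(u_n-u)$. Invoking $(h_1)$ together with $|\xi_n|\leq|u_n|+|u|$ yields the pointwise bound
\[
|H(u_n)-H(u)|\leq C\big(|u_n|^{q-1}+|u|^{q-1}+|u_n|^{r-1}+|u|^{r-1}\big)\,|u_n-u|.
\]
It then suffices to show that each of the four integrals $\int_M\alpha\,|v|^{s-1}|u_n-u|\,\mathrm{d}V_F(x)$, with $v\in\{u_n,u\}$ and $s\in\{q,r\}$, tends to $0$. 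For the two critical-exponent terms ($s=q$) I would use $\alpha\in L^\infty(M)$ and Hölder with exponents $\tfrac{q}{q-1}$ and $q$, obtaining a bound by $\|\alpha\|_{L^\infty(M)}\|v\|_{L^q(M)}^{q-1}\|u_n-u\|_{L^q(M)}\to 0$.

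The hard part will be the lower-order terms ($s=r$ with $r<p$): on the non-compact manifold there is no compact, nor even continuous, embedding into $L^r(M)$ to exploit directly, so one cannot argue as for the $q$-terms. The remedy is to make the weight absorb the integrability deficit, applying Hölder with the three exponents $\tfrac{q}{q-r}$, $\tfrac{q}{r-1}$, $q$ (which are admissible since $r>1$ and sum of reciprocals equals $1$):
\[
\int_M\alpha\,|v|^{r-1}|u_n-u|\,\mathrm{d}V_F(x)\leq \|\alpha\|_{L^{q/(q-r)}(M)}\,\|v\|_{L^q(M)}^{r-1}\,\|u_n-u\|_{L^q(M)},
\]
which again vanishes because $\|u_n-u\|_{L^q(M)}\to 0$ while $\|v\|_{L^q(M)}$ stays bounded. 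Summing the four estimates gives $\mathscr{K}(u_n)\to\mathscr{K}(u)$, so $\mathscr{K}_G$ is weakly continuous and, \emph{a fortiori}, weakly lower semicontinuous, as required.
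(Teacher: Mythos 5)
Your argument is correct and follows the same skeleton as the paper's proof — mean value theorem for $H$, the growth condition $(h_1)$ on $h$, Hölder, and the compact embedding of Theorem \ref{beagyazas} — but the key Hölder step is organised genuinely differently, and in a way that is arguably more robust. The paper keeps $\alpha$ in $L^\infty(M)$ and introduces auxiliary exponents $m,b$ chosen so that $\frac{m}{m-r+1},\frac{b}{b-q+1}\in(p,p^*)$, which places the factor $\|u_n-u\|_{\frac{m}{m-r+1}}$ in the range of the compact embedding but leaves the companion factor $\|u_n\|_m^{r-1}$ with $m=\frac{s(r-1)}{s-1}<p$ (since $r<p$), an $L^m$-norm that the stated embeddings do not obviously control on a non-compact manifold. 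You instead spend the integrability of the weight: since $\alpha\in L^1(M)\cap L^\infty(M)$ you interpolate to $\alpha\in L^{q/(q-r)}(M)$ and run a three-exponent Hölder inequality $\bigl(\frac{q}{q-r},\frac{q}{r-1},q\bigr)$ so that \emph{every} function factor is measured in the single norm $\|\cdot\|_{L^q(M)}$, where weak convergence upgrades to strong convergence and boundedness is automatic; this cleanly disposes of the subcritical term $|v|^{r-1}|u_n-u|$ without ever leaving $L^q$. Proving weak continuity directly rather than by contradiction is a cosmetic difference, and keeping all four terms $|u_n|^{s-1},|u|^{s-1}$ ($s\in\{r,q\}$) is actually more careful than the paper's collapse to the $|u_n|$-terms alone. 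One small point you should patch: $(h_1)$ permits $q=p$, in which case Theorem \ref{beagyazas} does not directly give $u_n\to u$ in $L^q(M)$; this is harmless because $|t|^{q-1}\leq |t|^{q'-1}+|t|^{r-1}$ for any $q'\in(p,p^*)$, so one may assume $q\in(p,p^*)$ from the outset, but the reduction deserves a sentence.
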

		\begin{proof}
			Consider $\{u_n\}$ a sequence in $W^{1,p}_{F,G}(M)$ which converges weakly to $u\in W^{1,p}_{F,G}(M)$, and suppose that $$\mathscr{K}_G(u_n)\cancel{\to} \mathscr{K}_G(u_n),\ \mbox{ as }n\to \infty.$$ Thus, there exist $\varepsilon>0$ and a subsequence of $\{u_n\}$, denoted again by $\{u_n\}$, such that  $$0<\varepsilon\leq |\mathscr{K}_G(u_n)-\mathscr{K}_G(u)|,\ \mbox{ for every }n\in \mathbb{N}.$$ Thus, by the mean value theorem, there exists $\theta_n\in(0,1)$ such that
			\begin{align}
			\varepsilon &\leq |\mathscr{K}_G(u_n)-\mathscr{K}_G(u)| \leq \int_{M} \alpha(x)|H(u_n(x))-H(u(x))|\,\mathrm{d}V_F(x)\nonumber \\
			&\leq \int_M \alpha(x)|h(u+\theta_n(u_n-u))|\cdot |u_n-u|\,\mathrm{d}V_F(x)\nonumber \\&\leq \int_{M} \alpha(x) |u_n-u| \left(c_1|u+\theta_n(u_n-u)|^{r-1}+c_2|u+\theta_n(u_n-u)|^{q-1}\right)\,\mathrm{d}V_F(x)\nonumber \\ &\leq \int_M \alpha(x)\left(c_1|u_n|^{r-1}|u_n-u|+c_2|u_n|^{q-1}|u_n-u|\right)\,\mathrm{d}V_F(x). \label{ezthasznalomkesobb}
			\end{align}
			For further use, let $m,b>0$ be two real numbers, such that 
			$$\begin{cases}
				\ds p<\frac{m}{m-r+1}<p^*,\ \mbox{ or } \ \frac{p^*(r-1)}{p^*-1}<m<\frac{p(r-1)}{p-1},\\
				\ds p<\frac{b}{b-q+1}<p^*,\  \mbox{ or }\  \frac{p^*(q-1)}{p^*-1}<b<\frac{p(q-1)}{p-1}.
			\end{cases}$$
			In this case, by H\"older inequality we have that 
			$$c_1\int_{M}\alpha(x)|u_n|^{r-1}|u_n-u|\,\mathrm{d}V_F(x)\leq c_1\|\alpha\|_{\infty} \|u_n-u\|_{\frac{m}{m-r+1}}\|u_n\|_m^{r-1},$$ and $$c_2\int_{M}\alpha(x)|u_n|^{q-1}|u_n-u|\,\mathrm{d}V_F(x)\leq c_2\|\alpha\|_{\infty} \|u_n-u\|_{\frac{b}{b-q+1}}\|u_n\|_b^{q-1}.$$ 
			Combining the above two inequalities with Theorem \ref{beagyazas}, we get that $$\int_{M}\alpha(x)|H(u_n(x))-H(u(x))|\,\mathrm{d}V_F(x)\to 0,$$ which is a contradiction in the light of \eqref{ezthasznalomkesobb}, which concludes the proof of the lemma.
		\end{proof}
			According to Lemma \ref{lsc_kritikus} and Lemma \ref{lsc2} the functional $\mathcal{E}_{\lambda,\mu}$ is weakly lower semicontinuous on small ball of the Sobolev space $W^{1,p}_{F,G}(M)$. 
			Now, we are in the position to prove our main result.
			\begin{proof}[Proof of Theorem \ref{main}] For any $r>0$ denote $B_r=\{u\in W^{1,p}_{F,G}: \|u\|_F\leq r\},$ 
			and define  $\J_{\mu,\lambda}:W^{1,p}_{F,G} \to \R,$ by $${\J_{\mu,\lambda}}(u)=\frac{\mu}{p^{*}}\int\limits_{M} |u|^{p^{*}}\,{\text d}V_F(x)+\lambda \int\limits_{M}\alpha(x)H(u)\,{\text d}V_F(x).$$
			
			For any $\lambda, \mu, \rho>0$ define
			\begin{equation}\label{VP}
			\varphi_{\mu,\lambda}(\rho):=\inf_{\|u\|<\rho}
			\frac{\sup_{ B_\rho}\J_{\mu,\lambda}-\J_{\mu,\lambda}(u)}{\rho^p-\|u\|_F^p} \ \mbox{ and } \psi_{\mu,\lambda}(\rho):=\sup_{B_\rho}\J_{\mu,\lambda}.
			\end{equation}
%			and
%			$$$$
			We claim that, under our assumptions, there exist $\lambda, \mu, \rho>0$ such that
			\begin{equation}\label{min}
			\varphi_{\mu,\lambda}(\rho)<\frac{1}{p}.
			\end{equation}
			It is easy to observe that, in order to grantee, the previous inequality it is enough to prove that there exist $\lambda, \mu, \rho>0$ such that
			\begin{equation}\label{amiveguliskell}\inf_{\sigma<\rho}\frac{\psi_{\mu,\lambda}(\rho)-\psi_{\mu,\lambda}(\sigma)}{\rho^p-\sigma^p}<\frac{1}{p}.\end{equation}
			
			Thus, in the sequel we focus on the  inequality \eqref{amiveguliskell}. Now, if $\sigma=\rho-\varepsilon$, for some $\varepsilon>0$, then we get 
			
			\begin{eqnarray*}
				\frac{\psi_{\mu,\lambda}(\rho)-\psi_{\mu,\lambda}(\sigma)}{\rho^p-\sigma^p}&=&\frac{\psi_{\mu,\lambda}(\rho)-\psi_{\mu,\lambda}(\rho-\varepsilon)}
				{\rho^p-(\rho-\varepsilon)^p}=\frac{\psi_{\mu,\lambda}(\rho)-\psi_{\mu,\lambda}(\rho-\varepsilon)}{\varepsilon}
				\cdot\frac{-\frac{\varepsilon}{\rho}}{\rho^{p-1}\left[(1-\frac{\varepsilon}{\rho})^p-1\right]},
			\end{eqnarray*}
			thus (\ref{amiveguliskell}) is  fulfilled if there exist $\mu,\rho>0$ such that
			\begin{equation}\label{min2}
			\limsup_{\varepsilon\to 0^+}\frac{\psi_\mu(\rho)-\psi_\mu(\rho-\varepsilon)}
			{\varepsilon}<\rho^{p-1}.
			\end{equation}
			We are going to estimate the right-hand side of \eqref{min2}, by assumption (\textbf{H}) we have that 
			
			\begin{eqnarray*}
				\frac{\psi_{\mu,\lambda}(\rho)-\psi_{\mu,\lambda}(\rho-\varepsilon)}
				{\varepsilon}&\leq&
				\frac{1}{\varepsilon}
				\sup_{\|v\|_F\leq 1}\int\limits_{M}\left|\,\int\limits_{(\rho-\varepsilon) v(x)}^{\rho v(x)}\mu|t|^{p^*-1}+\lambda \alpha(x)|h(t)|dt\right|\,\mathrm{d}V_F(x)\\&\leq&
				\frac{\kappa_{p^*}^{p^*}\mu}{p^*}\left|\frac{\rho^{p^*}\!-\!(\rho-\varepsilon)^{p^*}}{\varepsilon}\right|
				+\lambda c_1\frac{\kappa_{p^*}^q\|\alpha \|_{\frac{p^*}{p^*-q}}}{q}
				\cdot \left|\frac{\rho^{q}\!-\!(\rho-\varepsilon)^{q}}{\varepsilon}\right|\\&+&\lambda c_2
				\frac{\kappa_{p^*}^r\|\alpha\|_{\frac{p^*}{p^*-r}}}{r}
				\left|\frac{\rho^{r}\!-\!(\rho-\varepsilon)^{r}}{\varepsilon}\right|.
			\end{eqnarray*}
		Calculating the limit, as $\varepsilon\to 0$, we have that 
			\begin{equation}\label{psisup}\limsup_{\varepsilon\rightarrow 0}\frac{\psi_\mu(\rho)-\psi_\mu(\rho-\varepsilon)}
				{\varepsilon}\leq
				\mu\kappa_{p^*}^{p^*}\rho^{p^*-1}+\lambda c_1\kappa_{p^*}^q\|\alpha \|_{\frac{p^*}{p^*-q}}
				\rho^{q-1}+\lambda c_2\kappa_{p^*}^r\|\alpha\|_{\frac{p^*}{p^*-r}}\rho^{r-1}.
			\end{equation}
			Let $\rho_0>0$ such that $$\frac{\rho_0^{p-1}-\mu \kappa_{p^*}^{p^*}\rho_0^{p^*-1}}{c_1\kappa_{p^*}^q\|\alpha \|_{\frac{p^*}{p^*-q}}
				\rho_0^{q-1}+ c_2\kappa_{p^*}^r\|\alpha\|_{\frac{p^*}{p^*-r}}\rho_0^{r-1}}=\max_{t>0}\frac{t^{p-1}-\mu \kappa_{p^*}^{p^*}t^{p^*-1}}{c_1\kappa_{p^*}^q\|\alpha \|_{\frac{p^*}{p^*-q}}
				t^{q-1}+ c_2\kappa_{p^*}^r\|\alpha\|_{\frac{p^*}{p^*-r}}t^{r-1}},$$ and consider $\rho_\mu:=\min\{\rho_0,\rho^*\}$. Thus if $$0<\lambda<\lambda_*:=\frac{\rho_\mu^{p-1}-\mu \kappa_{p^*}^{p^*}\rho_\mu^{p^*-1}}{c_1\kappa_{p^*}^q\|\alpha \|_{\frac{p^*}{p^*-q}}
				\rho_\mu^{q-1}+ c_2\kappa_{p^*}^r\|\alpha\|_{\frac{p^*}{p^*-r}}\rho_\mu^{r-1}},$$ therefore, based on \eqref{psisup} there exists $(\lambda,\rho)$ such that $$\limsup_{\varepsilon\rightarrow 0}\frac{\psi_\mu(\rho)-\psi_\mu(\rho-\varepsilon)}
			{\varepsilon}<\rho_\mu^{p-1},$$ so that (\ref{min}) is fulfilled.
			
			Condition (\ref{min}) implies the existence of $u_0\in W^{1,p}_{F,G}$ with $\|u_0\|_F<\rho_\mu$ such that
			\begin{equation}\label{u0}
			\E_\mu(u_0)<\frac{1}{p}\rho_\mu^p- \J_\mu(u)
			\end{equation}
			for every $u\in B_{\rho_\mu}$. Since the energy $\E_{\lambda,\mu}$ is sequentially weakly lower semicontinuous in $B_{\rho_\mu}$, its restriction to the ball has a global minimum $u_\ast$. If $\|u_\ast\|=\rho_\mu$, then, from ($\ref{u0}$)
			$\E_{\lambda,\mu}(u_\ast)=\frac{1}{p}\rho_\mu^p- \J_\mu(u_\ast)>\E_{\lambda,\mu}(u_0),$
			a contradiction. It follows that $u_\ast$ is a local minimum for $\E_{\lambda,\mu}$ with $\|u_\ast\|_F<\rho_\mu$, hence in particular,  a  weak solution of problem $(\mathscr P_{\lambda, \mu})$. 
			
			In the sequel we prove that $u_*$ is not identically zero.  In order to prove that our solution is non-trivial, we show that there exists a function for which the energy is negative. To this end, observe that from the assumption on the function $\alpha$, it is clear that, there exists  $R>0$ such that $\alpha_R:=\underset{d_{F}(x_{0},x)\leq R}{\mathrm{essinf}}\alpha(x) > 0.$ Let $\zeta>0$ such that $\zeta<\displaystyle R\frac{1-a}{1+a}$. Now we can define the following function: 
			$$u_{R,\zeta}=\begin{cases}
			0, & x\in M\setminus B_F(x_0,R)\\
			\frac{1}{R-\zeta}(R-d_F(x_0,x)),&  x\in B_F(x_0,R)\setminus B_F(x_0,\zeta)\\
			1,&  x\in B_F(x_0,\zeta)
			\end{cases}$$
			
			Due to (\hyperref[h2]{$h_2$}), it is clear that there exists a sequence $\theta_j$, with $\theta_j\to 0$ as $j\to +\infty$, such that \begin{equation}\label{fontosH}
		c\theta_j^p\leq H(\theta_j), \ \forall c>0,
			\end{equation} and $j$ large enough.  
			
			Consider the following function $u_1:=\theta_j u_{R,\zeta}$. In the sequel we are going to estimate $\mathcal{E}_{\mu,\lambda}(u_1)$.
			
			First of all, recall that $r_{F}>0$ is the reversibility constant on $(M,F)$, see \eqref{reverzibilis}, i.e.  by the eikonal identity \eqref{tavolsag-derivalt} we have that $\ds\frac{1}{r_F}\leq F^*(x,-Dd_F(x_0,x))\leq r_F.$ Therefore, 
		\begin{align*}
		\int_M {F^*}^p(x,Du_1(x))\,\mathrm{d}V_F(x) &= \int_M {F^*}^p(x,\theta_jDu_{R,\zeta}(x))\,\mathrm{d}V_F(x) \\&=\theta_j^p\left(\frac{1}{R-\zeta}\right)^p \int\limits_{B_F(x_0,R)\setminus B_F(x_0,\zeta)} {F^*}^p(x,-Dd_F(x_0,x)(x))\,\mathrm{d}V_F(x) \\ &\leq \theta_j^p\left(\frac{1}{R-\zeta}\right)^p {r_F^p} \mathrm{Vol}_F(B_F(x_0,R)),
		\end{align*}
		and 
		\begin{align*}
		\int_M |u_1|^{p}\,\mathrm{d}V_F(x)&=\theta_j^p \int_M |u_{R,\zeta}|^p\,\mathrm{d}V_F(x)\leq \theta_j^p \mathrm{Vol}_F(B_F(x_0,R)).
		\end{align*}
		On the other hand 
			\begin{align*}
		\int_M |u_1|^{p^*}\,\mathrm{d}V_F(x)&=\theta_j^p \int_M |u_{R,\zeta}|^{p^*}\,\mathrm{d}V_F(x)\theta_j^{p^*} \mathrm{Vol}_F(B_F(x_0,\zeta)).
		\end{align*}
		Finally, by \eqref{fontosH} we have that 
		\begin{align*}
		\int_M \alpha(x)H(u_1)\,\mathrm{d}V_F(x)&\geq c\theta_j^p\int_{B_F(x_0,\zeta)}\alpha(x)\,\mathrm{d}V_F(x)\geq c\theta_j^p\alpha_R \mathrm{Vol}_F(B_F(x_0,\zeta)).
		\end{align*}
		Putting all together, one has that 
		\begin{align*}
		\mathcal{E}_{\mu,\lambda}(u_1)\leq& \left(\frac{1}{p}\left(\frac{1}{R-\zeta}\right)^p {r_F^p} \mathrm{Vol}_F(B_F(x_0,R)+\frac{1}{p}\mathrm{Vol}_F(B_F(x_0,R)))-c\alpha_R \mathrm{Vol}_F(B_F(x_0,\zeta))\right)\theta_j^p\\&- \frac{\theta_j^{p^*}}{p^{*}}\mathrm{Vol}_F(B_F(x_0,\zeta)).
		\end{align*}
		By choosing $c>0$ large enough, one can easily see that $0$ is not a local minimizer of the energy functional, and hence $u_*\neq 0$. Which concludes the proof.
		\end{proof}
	We end this section with the following corollary:
	\begin{corollary}
		Let $(M,g)$ be a $d$-dimensional complete non-compact Riemannian manifold of bounded geometry, i.e.  ${\rm Ric}_{(M,g)}\geq k (d-1)$ with $k<0$, having positive injectivity radius. Let $G$ be a coercive, compact  connected subgroup of $\mathrm{Isom}_G(M)$ such that $\mathrm{Fix}_M(G)=\{x_0\}$ for some $x_0\in M$. Let $h:\mathbb{R} \to \mathbb{R}$ be a continuous function verifying (\textbf{H}), and  let $\alpha:M \to \mathbb{R}$ be a continuous function radially symmetric function w.t.r $x_0\in M$, i.e. there exists $\alpha_{0}:[0,+\infty)\to \mathbb{R}$ such that $\alpha(x)=\alpha_{0}(d_g(x_0,x)),\ \forall x\in M$, which satisfies $$\alpha_{0}(s)\sinh\left(|k| \frac{s}{1-a}\right)^{d-1}\sim \frac{1}{s^\gamma},$$ for some $\gamma>1$, whenever $s\to \infty$. Then for every $\mu>0$, there exists $\lambda_*>0$ such that for every $\lambda<\lambda_*$ the problem $$
		\left\{
		\begin{array}{ll}
			-\Delta_{p,g} u+|u|^{p-2}u=
			\mu |u|^{p^*-2}u+\lambda \alpha(x)h(u), & \hbox{ in } M, \\
			u\in W^{1,p}_g(M)
		\end{array}
		\right.
		$$ has a non-zero $G$-invariant weak solution. 
	\end{corollary}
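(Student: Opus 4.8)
The plan is to deduce this statement directly from Theorem \ref{main} by realizing the Riemannian manifold $(M,g)$ as a degenerate Randers space. Concretely, I would take $\beta \equiv 0$ in the Finsler metric \eqref{Randers_metric}, so that $F(x,y) = \sqrt{g_x(y,y)}$ and $(M,F)$ coincides with the original Riemannian manifold $(M,g)$. The first step is to record the elementary but crucial consequences of $\beta = 0$: one has $a = \sup_{x\in M}\|\beta\|_g(x) = 0 < 1$, so the structural constraint on the reversibility is automatically met; the co-metric reduces to $F^*(x,\xi) = \|\xi\|_g(x)$ by \eqref{polar-transform}; the uniformity and reversibility constants become $l_F = r_F = 1$ by \eqref{uniformity_const} and \eqref{reverzibilis}; the Hausdorff volume form satisfies $\mathrm{d}V_F = \mathrm{d}v_g$ by \eqref{randers-volume-def}; and the distance functions coincide, $d_F = d_g$. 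Consequently $W^{1,p}_F(M) = W^{1,p}_g(M)$, and the Finsler $p$-Laplacian $\Delta_{p,F}$ coincides with the Riemannian $p$-Laplacian $\Delta_{p,g}$, so that the problem $(\mathscr{P}_{\lambda,\mu})$ specializes exactly to the equation in the statement.

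Next I would verify that the remaining hypotheses of Theorem \ref{main} transfer verbatim. Since $F = \sqrt{g}$, every Finsler isometry is a Riemannian isometry and conversely, so $\mathrm{Isom}_F(M) = \mathrm{Isom}_g(M)$, and the given coercive, compact, connected group $G$ with $\mathrm{Fix}_M(G) = \{x_0\}$ satisfies the group-theoretic assumption unchanged. The manifold $(M,g)$ is assumed complete, non-compact, of bounded geometry with $\mathrm{Ric}_{(M,g)} \geq k(d-1)$ and positive injectivity radius, which is precisely what Theorem \ref{main} requires, and $h$ is assumed to satisfy (\textbf{H}) directly. Finally, because $a = 0$ gives $\frac{s}{1-a} = s$, the growth condition $\alpha_0(s)\sinh(k s)^{d-1} \sim s^{-\gamma}$ imposed in the corollary is exactly assumption (\hyperref[alpha]{$\alpha$}) read with $d_F$ replaced by $d_g$ (so that, in particular, the non-negativity and monotonicity of $\alpha_0$ carry over); and since $d_F = d_g$, the radial dependence $\alpha(x) = \alpha_0(d_g(x_0,x)) = \alpha_0(d_F(x_0,x))$ matches as well.

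With all hypotheses in place, Theorem \ref{main} applied to the Randers space $(M,\sqrt{g})$ yields, for every $\mu > 0$, a threshold $\lambda_* > 0$ such that for every $\lambda < \lambda_*$ the problem $(\mathscr{P}_{\lambda,\mu})$ possesses a non-zero $G$-invariant weak solution $u_* \in W^{1,p}_F(M)$. Unwinding the identifications of the first step, namely $F^{*p}(x,Du) = |\nabla_g u|^p$, $\mathrm{d}V_F = \mathrm{d}v_g$, and the Green formula \eqref{Green} reducing to Riemannian integration by parts, shows that $u_*$ is exactly a non-zero $G$-invariant weak solution of the stated Riemannian problem, completing the argument. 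I do not expect any genuine obstacle: the only point demanding care is the bookkeeping of the first step, that is, checking that each Finsler-geometric object degenerates to its Riemannian analogue so that no hypothesis of Theorem \ref{main} is silently lost in the specialization.
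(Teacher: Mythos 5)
Your proposal is correct and is precisely the argument the paper intends: the corollary is stated without proof as the specialization of Theorem \ref{main} to the degenerate Randers case $\beta\equiv 0$, where $a=0$, $F^*(x,\xi)=\|\xi\|_g(x)$, $l_F=r_F=1$, $\mathrm{d}V_F=\mathrm{d}v_g$, $d_F=d_g$, and $\Delta_{p,F}=\Delta_{p,g}$, so that $(\mathscr{P}_{\lambda,\mu})$ becomes the stated Riemannian problem. Your careful bookkeeping of these identifications (including reading the hypothesis on $\alpha_0$ with $\frac{s}{1-a}=s$) is exactly what is needed, and nothing further is required.
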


	\section{Critical elliptic equations on Hadamard type Randers spaces}\label{finalremarks} 
	Through this section, assume that  $(M,F)$ is a $d$-dimensional Randers space endowed with the Finsler metric \eqref{Randers_metric} such that $\sup_{x\in M}\|\beta\|_g(x)<1$ and $(M,g)$ is a Hadamard manifold.    
	In this case a similar result as Theorem \ref{main} can be obtained, which reads as follows:
	\begin{theorem}\label{HadLetezes}
		 Let $(M,F)$ be a $d$-dimensional Randers space endowed with the Finsler metric \eqref{Randers_metric} such that $\sup_{x\in M}\|\beta\|_g(x)<1$ and $(M,g)$ is a Hadamard manifold.   Let $G$ be a compact  connected subgroup of $\mathrm{Isom}_F(M)$ such that $\mathrm{Fix}_M(G)=\{x_0\}$ for some $x_0\in M$. Let $h:\mathbb{R} \to \mathbb{R}$ be a continuous function verifying (\textbf{H}), and let $\alpha\in L^1(M)\cap L^\infty(M)\setminus\{0\}$ be a non-negative and radially symmetric function w.t.r $x_0\in M$, i.e. there exists $\alpha_{0}:[0,+\infty)\to \mathbb{R}$ such that $\alpha(x)=\alpha_{0}(d_F(x_0,x)),\ \forall x\in M$. 	Then for every $\mu>0$, there exists $\lambda_*>0$ such that for every $\lambda<\lambda_*$ the problem $(\mathscr{P}_{\lambda,\mu})$ has a non-zero $G$-invariant weak solution. 
	\end{theorem}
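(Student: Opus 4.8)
The plan is to transplant the variational scheme of Theorem \ref{main} almost verbatim, exploiting that the Randers data enter only through the pointwise and comparison estimates \eqref{randers-volume}--\eqref{becslesek}. Write $E_{\lambda,\mu}=\mathscr{F}_\mu-\lambda\mathscr{K}$. Two of the three pillars survive untouched: Lemma \ref{fontosLemma} is a pointwise inequality on $T_x^*M$, so the lower-semicontinuity Proposition \ref{lsc_kritikus} --- which uses only that inequality together with the critical constant $\kappa_{p^*}$ of $W^{1,p}_F(M)\hookrightarrow L^{p^*}(M)$ --- carries over once $\kappa_{p^*}<\infty$ is secured; and Lemma \ref{iso-lemma} needs only that $\alpha$ depends on $d_F(x_0,\cdot)$ with $\mathrm{Fix}_M(G)=\{x_0\}$, which is assumed outright here, giving the $G$-invariance of $E_{\lambda,\mu}$ and, via the principle of symmetric criticality, the reduction to $W^{1,p}_{F,G}(M)$.

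The genuinely geometric task is to re-establish the compact embedding $W^{1,p}_{F,G}(M)\hookrightarrow L^q(M)$ for $p<q<p^*$, which drives Lemma \ref{lsc2}. On a Cartan--Hadamard manifold the non-positive curvature makes geodesics issuing from $x_0$ diverge, so a compact connected $G$ with $\mathrm{Fix}_M(G)=\{x_0\}$ has orbit diameters growing with $d_g(x_0,\cdot)$; hence the action is coercive and the Skrzypczak--Tintarev compactness recalled in Section \ref{sec:background} yields $W^{1,p}_G(M)\hookrightarrow L^q(M)$. I would transfer this to the Finsler setting by the sandwiching argument in the proof of Theorem \ref{beagyazas}, using \eqref{randers-volume} and \eqref{becslesek} to compare the Finsler norms with their Riemannian counterparts. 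The constant $\kappa_{p^*}$ is obtained the same way, starting from the Cartan--Hadamard (Euclidean-type) Sobolev inequality $W^{1,p}_g(M)\hookrightarrow L^{p^*}(M)$. Since $\alpha\in L^1(M)\cap L^\infty(M)$ is now assumed directly, interpolation immediately gives $\|\alpha\|_{p^*/(p^*-q)},\|\alpha\|_{p^*/(p^*-r)}<\infty$, so the Bishop--Gromov integrability step used in Theorem \ref{main} is simply skipped.

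With these in place the remaining argument is identical to Theorem \ref{main}. The functional $\mathcal{E}_{\lambda,\mu}$ is weakly lower semicontinuous on a small ball $B_{\rho_\mu}$ by Proposition \ref{lsc_kritikus} and Lemma \ref{lsc2}, hence attains a global minimum $u_\ast$ there. Reproducing the bound \eqref{psisup} furnishes, for each $\mu>0$, a threshold $\lambda_\ast>0$ such that $\varphi_{\mu,\lambda}(\rho_\mu)<\tfrac1p$ for $\lambda<\lambda_\ast$; this excludes $\|u_\ast\|_F=\rho_\mu$, so $u_\ast$ is an interior local minimizer and therefore a weak solution of $(\mathscr{P}_{\lambda,\mu})$. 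Non-triviality follows by feeding the radial profile $\theta_j u_{R,\zeta}$ into the energy and using (\hyperref[h2]{$h_2$}) through \eqref{fontosH} to make $\mathcal{E}_{\lambda,\mu}(\theta_j u_{R,\zeta})<0$ for small $\theta_j$, so that $0$ is not a minimizer.

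The main obstacles are concentrated in this last circle. First, coercivity of $G$ is not hypothesized, so the delicate point is to prove rigorously that on a Hadamard manifold the single-fixed-point condition alone forces it, that is, that $O_t$ is bounded for every $t>0$; this is precisely the input that Theorem \ref{RiemannBeagyazas} converts into compactness. Second, the energy-negativity step requires $\alpha_R:=\mathrm{essinf}_{d_F(x_0,x)\le R}\alpha>0$ for some $R$, whereas I only know $\alpha\ge0$, $\alpha\not\equiv0$ and radial; since monotonicity of $\alpha_0$ is no longer assumed, I would first locate a radius (or annulus) about $x_0$ on which $\alpha$ is bounded below by a positive constant and support the radial, hence $G$-invariant, test function there, so that the perturbation term dominates the critical term at small amplitude.
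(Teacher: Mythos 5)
Your proposal is correct and follows essentially the same route as the paper, which itself only remarks that the proof of Theorem \ref{HadLetezes} is ``similar to the proof of Theorem \ref{main}'' and leaves the adaptations implicit. The two points you single out --- that on a Hadamard manifold the single-fixed-point hypothesis forces coercivity of $G$ (via convexity and monotonicity of $t\mapsto\mathrm{diam}\,G\gamma(t)$ along geodesics issuing from $x_0$, which is exactly the Skrzypczak--Tintarev mechanism recalled in Section \ref{sec:background}), and that the non-triviality test function must now be supported on an annulus where the merely non-negative radial $\alpha$ has positive essential infimum --- are precisely the places where the hypotheses of Theorem \ref{main} are weakened, and your fixes for both are sound.
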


	 Note that, such a result is an extension of \cite{Bisci_Jde}. 	Moreover, if with sectional curvature of $(M,g)$ bounded above by $-\mathtt{k}^2$, $\mathtt{k}>0$, then under the same assumptions of Theorem \ref{HadLetezes} for every $\mu>0$, there exists $\lambda_*>0$ such that for every $\lambda<\lambda_*$ the  problem 	$$
	\left\{
	\begin{array}{ll}
	-\Delta_{p,F} u=
	\mu |u|^{p^*-2}u+\lambda \alpha(x)h(u), & \hbox{ in } M, \\
	u\in W^{1,p}_F(M)
	\end{array}
	\right.
	$$ has a non-zero $G$-invariant weak solution. 
	
	The proof of the above statement is similar to the proof of Theorem \ref{main}. The key point is a McKean-type inequality (see for instance \cite[Theorem 0.6]{Yin-He}):
	$$\lambda_{1,g} :=\inf_{u \in W^{1,p}_g(M)}\frac{\displaystyle \int_M|\nabla_{g} u|^p\,\mathrm{d}v_g}{\displaystyle \int_M |u|^p\,\mathrm{d}v_g}\geq \left(\frac{(d-1)\mathtt{k}}{p}\right)^p,$$ 
	therefore, 
	$$\int_M |\nabla_g u|^p\,\mathrm{d}v_g\geq \frac{(d-1)^p\mathtt{k}^p}{p^p+(d-1)^p\mathtt{k}^p}\|u\|^p_{W^{1,p}_g(M)},\ \ \ u \in W^{1,p}_g(M).$$ 
	Using \eqref{randers-volume}, \eqref{becslesek},  and denoting by $\mathtt{C}_{\mathtt{k},p}:=\frac{(1-a^2)^{(d+1)/2}}{(1+a)^p} \frac{(d-1)^p\mathtt{k}^p}{p^p+(d-1)^p\mathtt{k}^p}$, we obtain that for every $u$
	\begin{align*} \label{D-norma_osszehasonlitas}
	\left(\frac{1}{1-a}\right)^p\|u\|^p_{W^{1,p}_g(M)}&\geq\int_M {F^*}^p(x,Du(x))\,\mathrm{d}V_F(x) \geq  \mathtt{C}_{\mathtt{k},p} \|u\|^p_{W^{1,p}_g(M)}.
	\end{align*}

	\section*{Acknowledgment}
	The author was supported by  UEFISCDI/CNCS grant PN-III-P4-ID-PCE2020-1001. 
	
	The author would like to thank the anonymous referee who provided useful and detailed comments on a previous/earlier version of the manuscript.
	
%	{ \footnotesize\bibliographystyle{apa}
%		\bibliography{hivatkozasok}}
%		\bibliographystyle{Falpha}
%		\bibliography{hivatkozasok}

\end{document}